\newtheorem{thm}{Theorem}[section]
\newtheorem{cor}[thm]{Corollary}
\newtheorem{lemma}[thm]{Lemma}
\newtheorem{prop}[thm]{Proposition}
\theoremstyle{definition}
\newtheorem{deff}[thm]{Definition}
\theoremstyle{remark}
\newtheorem{rem}[thm]{Remark}
\newcommand{\Hb}{\mathbb{H}}
\newcommand{\Z}{\mathbb{Z}}
\newcommand{\N}{\mathbb{N}}
\newcommand{\Q}{{\mathbb Q}}
\newcommand{\R}{{\mathbb R}}
\newcommand{\C}{{\mathbb C}}
\newcommand{\HQ}{{\mathbb H}}
\newcommand{\GL}{{\rm GL}}
\newcommand{\PGL}{{\rm PGL}}
\newcommand{\SL}{{\rm SL}}
\newcommand{\SO}{{\rm SO}}
\newcommand{\SG}{{\rm SG}}
\newcommand{\PSL}{{\rm PSL}}
\newcommand{\matriz}[1]{\begin{array} #1 \end{array}}
\newcommand{\GEN}[1]{\left\langle #1 \right\rangle}
\newcommand{\quat}[2]{\left( \frac{#1}{#2} \right)}
\newcommand{\U}{\mathcal{U}}
\newenvironment{proofof}{\par\noindent \textit{Proof of }}{\qed\par\bigskip}
\begin{document}
\title{ Aritmethic lattices of $\SO(1,n)$  and units of  group rings}

\author{Sheila Chagas}
\address[Sheila Chagas]{Universidade de
	Bras\'{i}lia, Departamento de Matem\'atica, 70910-900 Bras\'{\i}lia-DF, Brazil.}
\email{sheila@mat.unb.br}

\author{\'{A}ngel del Rio}
\thanks{the second author was partially supported by Grant PID2020-113206GB-I00 funded by MCIN/AEI/10.13039/501100011033 and by Fundación Séneca (22004/PI/22)}

\address[\'{A}ngel del R\'{i}o]{Departamento de Matem\'{a}ticas, Universidad de Murcia, 30100, Murcia, Spain.} 
\email{adelrio@um.es}

\author{Pavel A. Zalesskii}
\thanks{the third author was partially supported by CNPq}
\address[Pavel Zalesskii]{Universidade de
	Bras\'{i}lia, Departamento de Matem\'atica, 70910-900 Bras\'{\i}lia-DF, Brazil.}
\email{pz@mat.unb.br}

\date{\footnotesize\today}

\keywords{Arithmetic lattices, conjugacy separable, unit groups, group rings}

\subjclass[2010]{20E26, 20C05, 16S34, 11E57}

\begin{abstract}
 We establish that standard arithmetic subgroups of a special orthogonal group $\SO(1,n)$ are conjugacy separable. As an application we deduce this property for unit groups of certain integer group rings.
 We also prove that finite quotients of group of units of any of these group rings  determines the original group ring.
\end{abstract}
\maketitle

\section{Introduction}

In recent years there has been a great deal of interest in detecting properties of a group $G$  via its finite quotients, or more conceptually by its profinite completion $\widehat{G}$. 
To detect  properties of combinatorial  or geometric nature in the profinite completion worked  particularly well for so called virtually compact special groups \cite{WiltonZalesskii2016,WiltonZalesskii2019}. These groups defined and studied  by Wise and his collaborators are playing a central role in the modern geometric group theory (see \cite{BergeronHaglundWise2011,  H-W-1, Wise_sm-c, Wise-qc-h} and many others). The important property of these groups is that they admit a  hierarchy that survives in the profinite completion and so can be used to detect properties of the original group.   Bergeron, Hagung and Wise  \cite[Theorem 1.10]{BergeronHaglundWise2011} proved that cocompact standard arithmetic subgroups of $\SO(1,n)$ are virtually compact special and for non-cocompact case they are virtually virtual retracts of virtually compact special groups by \cite[Lemma 6.3 and Theorem 7.4]{PS}. The objective of this paper is to use this result to prove that
these groups are conjugacy separable and to deduce this property for groups of units $\U( \Z H)$ of  group rings $\Z H$ of certain finite groups $H$ (see Theorem \ref{good}). We also show that finite quotients of the unit group $\U( \Z H)$ of these group rings determine  $\U( \Z H)$ up to isomorphism and hence the group ring itself. Note that the question of determining a group by its finite quotients (or profinite completion) is an old question in group theory, but had a new wind in 21-st century in geometric group theory.

A group $G$ is  termed conjugacy separable if given any two elements  $x$ and $y$ that are
non-conjugate  in  $G$, there exists some finite
quotient of $G$ in which the images of $x$ and $y$ are
 not conjugate.
In other words the group is  conjugacy separable if the conjugacy class of every element is closed in the profinite topology of $G$.
The notion
owes its importance
to the fact, first pointed out by Mal'cev \cite{Malcev1958}, that the conjugacy problem  has
a positive solution in finitely presented conjugacy separable group. 
E. Grossman \cite{Grossman1974} also observed  that the residual finiteness of
$Out(H)$ is equivalent to the conjugacy separability of $H$  for a finitely
generated group $H$ whose
elementwise inner automorphisms (i.e. automorphisms preserving conjugacy classes) are inner.  Moreover, the observation
of Grossmann
becomes especially important due to the result of Minasyan and Osin
\cite{MinasyanOsin2010} stating that every pointwise inner automorphism of a
torsion-free relatively hyperbolic group is inner (for torsion-free hyperbolic groups the result was obtained
independently by Bogopolsky and Ventura \cite{BogopolskiVentura2011}).

  Well-known classical examples of conjugacy  separable groups are
polycyclic-by-finite groups, free-by-finite groups and surface-by-finite groups. Moreover, fundamental groups of 3-manifolds are also conjugacy separable.  Minasyan  \cite{Minasyan2012} proved that right angled Artin groups are conjugacy separable. This result was used  by Minasyan and the  third author to show \cite{MinasyanZalesskii2015} that all hyperbolic virtually compact special groups (in the sense of D.Wise) are conjugacy separable.

The first objective of this paper is to extend the last result by relaxing the hyperbolicity hypothesis. More precisely we prove the following:

\begin{thm}\label{GVCSHRNRH}
Let $G$ be a group having  a subgroup of finite index which is a virtual retract of a compact special and toral relatively hyperbolic group. Then $G$ is conjugacy separable. (see Definitions 
\ref{virtspe}   and \ref{DefToral}).
\end{thm}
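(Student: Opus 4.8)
The plan is to reduce the statement to the known case of hyperbolic virtually compact special groups (Minasyan--Zalesskii \cite{MinasyanZalesskii2015}) by peeling off the toral relatively hyperbolic structure. Let $H \le G$ be the finite-index subgroup that is compact special and toral relatively hyperbolic, say hyperbolic relative to a finite collection of finitely generated abelian (equivalently, virtually abelian, but ``toral'' forces abelian and torsion-free) peripheral subgroups $\{P_1,\dots,P_k\}$. Since conjugacy separability is inherited by passing to finite-index overgroups for groups in which it is combined with the right separability hypotheses — more precisely, by the results on conjugacy separability and finite extensions (Chagulov, or the standard fact that if $H$ is conjugacy separable, hereditarily conjugacy separable, and $[G:H]<\infty$ then $G$ is conjugacy separable) — it suffices to prove that $H$ is \emph{hereditarily} conjugacy separable, i.e. every finite-index subgroup of $H$ is conjugacy separable. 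Because being compact special and toral relatively hyperbolic is itself inherited by finite-index subgroups, it is in fact enough to prove that $H$ itself is conjugacy separable and then quote the same argument for each finite-index subgroup.

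The main step is then: a compact special toral relatively hyperbolic group is conjugacy separable. The strategy is to use the Dehn filling / relative hyperbolicity machinery to compare conjugacy in $H$ with conjugacy in a hyperbolic quotient. First I would record that $H$, being compact special, is virtually cocompact special in the sense of Wise, hence $H$ is residually finite, the peripheral subgroups $P_i$ are separable in $H$, and the conjugacy classes of (and the double cosets involving) the $P_i$ behave well under the profinite topology; in particular $H$ induces the full profinite topology on each $P_i$ and each $P_i$ is closed. Two non-conjugate elements $x,y \in H$ fall into two cases. If neither $x$ nor $y$ is conjugate into any $P_i$, then they are ``hyperbolic'' elements, and by a sufficiently deep Dehn filling $H \twoheadrightarrow \overline{H} = H / \llangle N_1,\dots,N_k \rrangle$ (with $N_i \le P_i$ of finite index, chosen so that the filling is a finite quotient — possible since each $P_i$ is abelian and separable) the images $\bar x,\bar y$ remain non-conjugate, using the control of conjugacy under Dehn fillings (Osin, Groves--Manning); since $\overline{H}$ is finite here, we are done, or alternatively route through the hyperbolic quotient being conjugacy separable by \cite{MinasyanZalesskii2015}. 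If instead (say) $x$ is conjugate into some $P_i$ while $y$ is not, separability of $P_i$ (together with separability of the relevant conjugates) yields a finite quotient distinguishing them. The remaining case is that both $x$ and $y$ are conjugate into peripheral subgroups; here one conjugates so that $x \in P_i$, $y \in P_j$, and uses that $H$ is conjugacy separable relative to the $\{P_i\}$ — two elements of peripheral subgroups are conjugate in $H$ iff $i=j$ and they are conjugate by an element of the (virtually abelian, hence conjugacy separable) $P_i$, or there is a short explicit connecting path — and each of these finitely many conditions is detected in a finite quotient because the $P_i$ are separable, abelian, and the elementwise behavior is controlled.

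The hard part will be the peripheral case and, more precisely, establishing that the relative conjugacy separability of $H$ with respect to $\{P_i\}$ plus separability of the $P_i$ upgrades to genuine conjugacy separability — this is exactly where compact specialness (as opposed to mere relative hyperbolicity and residual finiteness) is essential, because one needs the profinite topology of $H$ to be rich enough to separate the double cosets $P_i g P_j$ and to induce the full topology on each $P_i$, and for the induced splitting of $\overline{H}$ (a graph-of-groups / acylindrical structure) to survive in profinite completions. I expect to invoke the profinite acylindrical / profinite Dehn filling results together with the good-separability consequences of Wise's theory (via the embedding into a right-angled Artin group and Minasyan's conjugacy separability of RAAGs), assembling the finite quotient distinguishing $x$ from $y$ by combining a finite quotient coming from the RAAG side (handling the hyperbolic part) with one coming from the abelian peripheral structure. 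A clean way to organize all of this is to verify that $H$ satisfies the hypotheses of a general ``conjugacy separability for relatively hyperbolic groups with (conjugacy) separable peripheral structure'' criterion — for instance the one deducible from the work of Minasyan and Zalesskii on profinite properties of relatively hyperbolic groups — so that Theorem \ref{GVCSHRNRH} becomes a corollary once the two inputs (compact special $\Rightarrow$ good separability of $\{P_i\}$; toral $\Rightarrow$ peripheral subgroups are conjugacy separable and their profinite topology is controlled) are checked.
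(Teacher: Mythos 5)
There is a genuine gap, and it sits exactly at the step you dispose of in one sentence. Your reduction rests on the claim that if $H$ is (hereditarily) conjugacy separable and $[G:H]<\infty$ then $G$ is conjugacy separable. No such ``standard fact'' exists: conjugacy separability does not pass to finite-index overgroups, and the paper itself points to counterexamples of finite extensions of conjugacy separable groups that fail the property (\cite{CZ2,G}). Even hereditary conjugacy separability of $H$ is not known to suffice; if it did, the theorem would be essentially trivial, since a compact special group is a virtual retract of a right-angled Artin group (Haglund--Wise) and hence already hereditarily conjugacy separable by Minasyan \cite{Minasyan2012} --- which also means the portion of your proposal that you flag as ``the main step'' and ``the hard part'' (conjugacy separability of $H$ itself, via Dehn fillings and peripheral separability) is the part that needs no new argument at all. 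Incidentally, within that sketch the filled quotient $H/\llangle N_1,\dots,N_k\rrangle$ is a hyperbolic group, not a finite one, so ``since $\overline{H}$ is finite here, we are done'' does not stand as written.

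What the theorem actually requires, and what your proposal omits entirely, is verifying a centralizer criterion for elements of the overgroup $G$, in the spirit of \cite[Proposition~2.1]{ChagasZalesskii2010} (the paper's Proposition \ref{prop:C-Z_crit}): for $x\in G$ with $x^m\in H$, one must show that $x$ is conjugacy distinguished in $C_G(x^m)$ and that finite-index subgroups of $C_G(x^m)$ are separable in $G$. For infinite-order elements this is handled by the relatively hyperbolic structure of $H$ (either $g^m$ lies in an almost malnormal abelian parabolic, or it is hyperbolic with virtually cyclic centralizer). The delicate case is a \emph{torsion} element $g\in G$ (which cannot be pushed into $H$ at all): the paper's Proposition \ref{relhypvirtualspecial} treats $G=H\rtimes\langle g\rangle$ by induction on $|G:H|$, using that $C_G(g^m)$ is relatively quasiconvex (Minasyan--Osin) and separable (Groves--Manning), then passing to $C_G(g^m)/\langle g^m\rangle$ and retracting onto the finite cyclic factor. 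Your proposal has no mechanism for torsion elements of $G\setminus H$ and no substitute for the centralizer criterion, so the reduction collapses precisely where the real difficulty lies.
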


Arithmetic groups arise naturally as discrete subgroups of Lie groups, defined by arithmetic properties. By the fundamental theorem of Borel and Harish-Chandra an arithmetic subgroup  $\Gamma$ of  the special orthogonal group $\SO(1,n)$ is a lattice (i.e a discrete subgroup of finite covolume).
An arithmetic lattice $\Gamma$ of  $\SO(1,n)$ is said to be \emph{standard} (or of the simplest type) if it comes from a quadratic form $q$ defined over a totally real algebraic number field such that its completion for one real place is isomorphic to $\SO(1,n)$ and for all other real places is either positively or negatively defined. More precisely, let $R_k$ be the ring of integers of $k$. A {\it standard arithmetic lattice} $\Gamma$ of $SO(n,1)$ is a group commensurable with the subgroup $SO(q, R_k)$ of $R_k$-points of $SO(q)= \{X \in SL(n + 1, \C) \mid X^t q X = q \}$.  The construction of $\Gamma$ can be found in \cite[Section 6.4]{Morris2015}, for example. The  non-standard arithmetic lattices that come from Hermitian forms are not considered in this paper. Note also that for $n$ even, all arithmetic lattices of $\SO(1,n)$ are standard (see \cite[Remark 2.1]{KPV}).

As an application of \Cref{GVCSHRNRH} we obtain the following:

\begin{thm}\label{standard arithmetic}
Standard arithmetic lattices of special orthogonal groups $\SO(1,n)$ are conjugacy separable.
\end{thm}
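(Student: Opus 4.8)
The plan is to reduce the statement to \Cref{GVCSHRNRH}, so what has to be verified is that a standard arithmetic lattice $\Gamma$ of $\SO(1,n)$ has a finite-index subgroup which is simultaneously compact special (in the sense of Wise) and toral relatively hyperbolic. The second ingredient is essentially off the shelf: by the theorem of Borel and Harish-Chandra $\Gamma$ is a lattice in $\SO(1,n)$, hence the fundamental group of a complete finite-volume real hyperbolic $n$-orbifold; passing to a torsion-free finite-index subgroup (Selberg's lemma) we may assume $\Gamma$ is the fundamental group of a finite-volume hyperbolic manifold $M$. Such a $\Gamma$ is hyperbolic relative to the cusp subgroups, and for a \emph{standard} lattice the cusps are virtually abelian (indeed the peripheral subgroups are finitely generated virtually abelian), so the pair $(\Gamma,\{\text{cusps}\})$ is toral relatively hyperbolic in the sense of \Cref{DefToral}. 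In the cocompact case there are no cusps and $\Gamma$ is genuinely (word-)hyperbolic, which is a degenerate instance of toral relative hyperbolicity.

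For the ``compact special'' half I would invoke \cite[Theorem 1.10]{BergeronHaglundWise2011}: Bergeron, Haglund and Wise prove precisely that standard arithmetic subgroups of $\SO(1,n)$ are virtually compact special, and the Remark at the end of their paper extends this to the non-cocompact case. Thus, after passing to a further finite-index subgroup, $\Gamma$ acts freely, cocompactly and specially on a CAT(0) cube complex, i.e. $\Gamma$ has a compact special subgroup of finite index. Intersecting the two finite-index subgroups produced in the previous two steps yields a single finite-index subgroup $\Gamma_0 \le \Gamma$ that is compact special and, being finite index in a toral relatively hyperbolic group, is itself toral relatively hyperbolic (relative hyperbolicity passes to finite-index subgroups, with peripheral subgroups the induced cusp subgroups, which remain finitely generated virtually abelian). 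Then \Cref{GVCSHRNRH} applies to $G = \Gamma$ with this $\Gamma_0$, giving conjugacy separability of $\Gamma$.

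The main point requiring care is matching the hypotheses of \Cref{GVCSHRNRH} to what the cited results literally deliver: Bergeron--Haglund--Wise give a special action on a cube complex but one must confirm it can be taken cocompact in the non-cocompact lattice case (this is exactly what their closing Remark addresses, via relative hyperbolicity and the theory of quasiconvex hierarchies), and one must confirm that the peripheral structure used there is the same cusp structure that makes the group toral — i.e. that the peripheral subgroups are virtually abelian, which is where ``standard'' (as opposed to general arithmetic, or Hermitian-type) lattices is used. I would also remark that for $n$ even the hypothesis ``standard'' is automatic by \cite[Remark 2.1]{KPV}, so the theorem covers \emph{all} arithmetic lattices of $\SO(1,n)$ in those dimensions. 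No further computation is needed: once $\Gamma_0$ is exhibited, \Cref{GVCSHRNRH} does the rest.
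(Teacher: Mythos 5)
Your proposal follows essentially the same route as the paper: observe that a torsion-free finite-index sublattice is the fundamental group of a finite-volume hyperbolic manifold, hence toral relatively hyperbolic with respect to its cusps, combine this with \cite[Theorem 1.10]{BergeronHaglundWise2011} to get a finite-index compact special toral relatively hyperbolic subgroup, and apply \Cref{GVCSHRNRH}. The only point where you are slightly more cautious than the paper (passing to a further finite-index subgroup so the cusp subgroups are genuinely abelian rather than just virtually abelian Bieberbach groups) is a harmless refinement, not a different argument.
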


It is known that arithmetic lattices having the  Congruence Subgroup Property (see \Cref{CSP} for definition) are not conjugacy separable (cf. \cite[Theorem 3]{Stebe1972},  \cite[Proposition 8.26 and remark after  it]{PlatonovRapinchuk1994}).
On the other hand, arithmetic subgroups of $\SL_2(\mathbb{C})$ do not  have   Congruence Subgroup Property  but are conjugacy separable.

  Based on this, the third author  conjectured at Banff conference on arithmetic groups (2013) that all arithmetic groups failing  Congruence Subgroup Property are conjugacy separable.  Theorem \ref{standard arithmetic} gives another evidence to this conjecture.

Note that all discrete subgroups of $\SO(1,3)$ or equivalently of $\SL_2(\mathbb{C})$ are virtually special and so conjugacy separability holds for all groups commensurable with them. We state this as  the following corollary.

\begin{cor} Let $G$ be the fundamental group of  a hyperbolic 3-orbifold. Then $G$ is conjugacy separable.	
\end{cor}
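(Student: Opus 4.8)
The plan is to derive this corollary from Theorem~\ref{standard arithmetic} together with the theory of compact special groups, once one knows that every finitely generated discrete subgroup of $\SO(1,3)\cong\PSL_2(\C)$ (up to finite index and commensurability) lands in the scope of either that theorem or of \Cref{GVCSHRNRH}. First I would recall that if $G$ is the fundamental group of a hyperbolic $3$-orbifold, then $G$ is a lattice in $\mathrm{Isom}(\Hb^3)$, hence virtually a torsion-free lattice in $\PSL_2(\C)=\SO(1,3)^{\circ}$, i.e. $G$ has a finite-index subgroup $G_0$ which is the fundamental group of a finite-volume hyperbolic $3$-manifold $M$. Since conjugacy separability is preserved under passing from a finite-index subgroup to the whole group for the class of groups at hand (here one invokes the standard fact, e.g. from \cite{MinasyanZalesskii2015}, that a virtually conjugacy separable group which is itself ``good'' in the relevant sense is conjugacy separable — or more cleanly, that \Cref{GVCSHRNRH} already incorporates the virtual hypothesis), it suffices to prove the statement for $G_0=\pi_1(M)$.

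Next I would split into the two geometric cases. If $M$ is closed, then by the work of Wise and collaborators (Agol's theorem, building on \cite{BergeronHaglundWise2011, H-W-1, Wise_sm-c}) $\pi_1(M)$ is virtually compact special, and being the fundamental group of a closed hyperbolic manifold it is word-hyperbolic; hence \Cref{GVCSHRNRH} applies directly (with the toral relatively hyperbolic condition being vacuous, as a hyperbolic group is hyperbolic relative to the trivial family, which is trivially toral). If $M$ is non-compact of finite volume, then $\pi_1(M)$ is hyperbolic relative to its cusp subgroups, each of which is virtually $\Z^2$, so $\pi_1(M)$ is toral relatively hyperbolic; and by Wise's theorem on cusped hyperbolic $3$-manifolds (see the discussion in \cite{Wise-qc-h} and the references of the introduction) $\pi_1(M)$ is virtually compact special. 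Again \Cref{GVCSHRNRH} applies. Thus in both cases $G_0$, and hence $G$, is conjugacy separable.

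Alternatively — and this is the route the phrasing ``Note that all discrete subgroups of $\SO(1,3)$\dots are virtually special'' suggests — one may simply observe that every such $G$ is commensurable with a standard arithmetic lattice \emph{or}, if $M$ is non-arithmetic, is nonetheless virtually compact special and toral relatively hyperbolic, so that \Cref{GVCSHRNRH} (not merely \Cref{standard arithmetic}) is what is really being invoked; \Cref{standard arithmetic} is the special case where one additionally knows arithmeticity. I would phrase the corollary's proof so as to quote \Cref{GVCSHRNRH} together with the virtual compact specialness of finite-volume hyperbolic $3$-manifold groups and their (toral) relative hyperbolicity, rather than \Cref{standard arithmetic}, since the arithmetic hypothesis is not available for a general hyperbolic $3$-orbifold.

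The main obstacle is making precise the input ``fundamental groups of finite-volume hyperbolic $3$-manifolds are virtually compact special,'' in the cusped case: one must cite Wise's theorem for hyperbolic groups with quasiconvex hierarchies together with the fact that geometrically finite subgroups (in particular the peripheral $\Z^2$'s) are separable and the relative hyperbolic structure interacts correctly with specialness — this is where all the geometric group theory is concentrated, and the rest of the argument is the routine reduction to finite index plus the bookkeeping of checking that the peripheral subgroups are indeed toral (virtually abelian, in fact virtually $\Z^2$), so that the hypotheses of \Cref{GVCSHRNRH} are literally met. I would keep the write-up short: reduce to the manifold case, split closed vs.\ cusped, cite virtual compact specialness and (toral) relative hyperbolicity in each, and conclude by \Cref{GVCSHRNRH}.
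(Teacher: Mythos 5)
Your proposal is correct and follows essentially the same route as the paper: the corollary is the observation that a (finite-volume) hyperbolic $3$-orbifold group has a finite-index torsion-free subgroup which is virtually compact special (Agol/Wise) and toral relatively hyperbolic with respect to its free abelian cusps, so \Cref{GVCSHRNRH} applies to $G$ directly. The only caution is your first-paragraph claim that conjugacy separability passes up from a finite-index subgroup, which is not a standard fact (the paper itself recalls it can fail for finite-index overgroups); the clean fix you already give — that the hypothesis of \Cref{GVCSHRNRH} is itself a virtual one, so no such transfer is needed — is exactly how the paper's argument should be read.
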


It is an open question weather non-standard arithmetic subgroups of $\SO(1,n)$ are virtually compact special, so the methods of this paper do not give the result for such arithmetic groups.

The results above allow us to prove conjugacy separability for the group of units $\U(\Z G)$ of the integral group rings of some finite groups $G$. These groups of units  have nice residual properties that allows to approach  the question to what extent a finitely generated group is determined by its finite quotients or equivalently by its profinite completion for $U(\Z G)$. 

For unit groups the question takes the following form:
\begin{quote}
	\textbf{Question 1}: If $G$ and $H$ are finite groups such that  $\U(\Z G)$ and $\U(\Z H)$ have the same finite epimorphic images up to isomorphism, are $\U(\Z G)$ and $\U(\Z H)$ isomorphic?
\end{quote}

 It is worth to observe that the isomorphism type of the integral group ring of a finite group is determined by  its group of units, i.e. if $G$ and $H$ are finite groups such that $\U(\Z G)$ and $\U(\Z H)$  are isomorphic then so are $\Z G$ and $\Z H$ \cite[Proposition~2.2]{Jespers2021}. However this does not imply that $G$ and $H$ are isomorphic \cite{Jespers2021}.

It is well-known that the profinite completion of two finitely generated groups are isomorphic  if and only if the families of their finite quotients coincide (see \cite[Corollary 3.2.8]{RibesZalesskii2010}  for example). Since the unit group is finitely generated we can therefore reformulate the question as follows: under which conditions does the profinite completion $\widehat{\U(\Z G)}$ of $\U(\Z G)$ determine $\U(\Z G)$ up to isomorphism?
Or more precisely, Question 1 can be reformulated as follows: 
\begin{quote}
	\textbf{Question $\widehat 1$}: Does
	$\widehat{\U(\Z G)}\cong \widehat{\U(\Z H)}$ imply $\U(\Z G)\cong \U(\Z H)$, for 
	$G$ and $H$ finite groups? 
\end{quote}

Note that in general it is very difficult to decide whether a residually finite group $G$ is determined by its profinite completion. For example, it is a long standing question of Remeslennikov whether a free group of finite rank is determined by its profinite completion within all finitely generated residually finite groups.  

To state our results on unit groups we need more terminology which we borrow from \cite[Definition~11.2.2]{JespersdelRio2016}. 

Let $A$ be a simple finite dimensional rational algebra. We say that $A$ is \emph{exceptional} if it is isomorphic to one of the following types:
\begin{enumerate}
	\item a non-commutative division algebra non-isomorphic to a totally definite quaternion algebra. (See \Cref{SectionPreliminaries} for the latter notion.)
	\item $M_2(D)$ with $D$ either $\Q$, an imaginary quadratic extension of $\Q$ or a totally definite quaternion algebra over $\Q$. 
\end{enumerate}

\bigskip Lattices  of the exceptional components of type (1) as well as in  all non exceptional components satisfy the celebrated Margulis Normal Subgroup Theorem \cite[Theorem 4, Introduction]{margulis}: a normal subgroup is either finite or of finite index. In particular these groups are Fab, i.e. every sugroup of finite index has finite abelianization (this notion comes from Galois theory \cite{Labute}).

\medskip

In contrast, lattices of  exceptional components of type (2) are virtually virtual retracts of compact special groups and so have  virtual cyclic retract property (VCR): every cyclic subgroup is a (not normal) semidirect factor of a subgroup of finite index. This property was first introduced by Long and Reid in \cite{LR},
however, implicitly they were investigated much earlier. Many important groups possesses this property: free groups, surface groups, hyperbolic 3-manifold groups, virtually special groups, Right angled Artin groups and Coxeter groups (see \cite[Corolary 1.6]{minasyan}). In fact, Minasyan proved \cite[Theorem 1.5]{minasyan} that VCR is equivalent to virtual abelian retract property.

Note that VCR is stable under direct product and commensurability (see \cite[Theorem 1.4]{minasyan} that allows us to deduce the following characterization:

\begin{thm}\label{characterization}
	Let $G$ be a finite group and $\Z G$ be its group ring. Then the following conditions are equivalent
	\begin{enumerate}
	
		\item[(i)] $U(\Z G)$ is virtually special;
		\item[(ii)] $U(\Z G)$ has virtual cyclic retract property;
		\item[(iii)] $U(\Z G)$ has virtual abelian retract property;
		\item[(iv)] every non-commutative simple quotient of its rational group algebra is either a totally definite quaternion algebra, or exceptional of type (2)
		\end{enumerate}
\end{thm}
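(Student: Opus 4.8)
The plan is to reduce the entire statement to a question about the individual Wedderburn components of the rational group algebra $\Q G$. Write $\Q G=\prod_{i=1}^{r}A_i$ with each $A_i\cong M_{n_i}(D_i)$ a simple algebra over a division algebra $D_i$, and fix a maximal order $\bo_i$ in $A_i$. Since $\Z G$ is a $\Z$-order in $\Q G$ and any two $\Z$-orders in a finite dimensional semisimple $\Q$-algebra have commensurable unit groups, $\U(\Z G)$ is commensurable with $\prod_{i=1}^{r}\U(\bo_i)$ (see \cite{JespersdelRio2016}). The virtual cyclic retract property is preserved under commensurability and finite direct products, passes to subgroups, and is equivalent to the virtual abelian retract property, by Minasyan \cite[Theorems~1.4 and~1.5]{minasyan}; and ``virtually special'' enjoys the same stability properties (it is a commensurability invariant, and it is inherited by finite direct products and by finitely generated subgroups). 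Hence each of (i)--(iii) holds for $\U(\Z G)$ if and only if it holds for every $\U(\bo_i)$; as (iv) only constrains the non-commutative components, it then suffices to prove, for a single simple component $A=M_n(D)$ with maximal order $\bo$, that $\U(\bo)$ is virtually special, if and only if it has VCR, if and only if it has the virtual abelian retract property, if and only if $A$ is commutative, or a totally definite quaternion algebra, or exceptional of type~(2).

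For the direction producing virtually special groups I would treat the admissible shapes of $A$ in turn. If $A$ is commutative, then $\U(\bo)$ is finitely generated abelian, hence virtually $\Z^{k}$ and so virtually special. If $A$ is a totally definite quaternion algebra, then $\U(\bo)$ is finite. If $A\cong M_2(\Q)$, then $\U(\bo)$ is commensurable with $\SL_2(\Z)$, hence virtually free. If $A\cong M_2(K)$ with $K$ imaginary quadratic, then $\U(\bo)$ is commensurable with a Bianchi group, i.e.\ with the fundamental group of a finite volume hyperbolic $3$-orbifold, which is virtually special, since every discrete subgroup of $\SO(1,3)$ is virtually special. Finally, if $A\cong M_2(H)$ with $H$ a totally definite quaternion $\Q$-algebra, then $H\otimes_\Q\R\cong\Hb$, so $\U(\bo)$ is commensurable with an arithmetic subgroup of $\SL_2(\Hb)\cong\mathrm{Spin}(5,1)$; pushing it through the isogeny $\mathrm{Spin}(5,1)\to\SO(1,5)$ identifies $\U(\bo)$, up to commensurability, with a standard arithmetic lattice of $\SO(1,5)$, which is virtually compact special by \cite[Theorem~1.10]{BergeronHaglundWise2011}. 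Thus $\U(\bo)$ is virtually special in each case, which yields (iv)$\Rightarrow$(i); and (i)$\Rightarrow$(ii)$\Rightarrow$(iii) is immediate from Minasyan's results: virtually special groups have VCR \cite[Corollary~1.6]{minasyan}, and VCR is equivalent to the virtual abelian retract property.

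It remains to prove (iii)$\Rightarrow$(iv), which I would do by contraposition. Suppose that some non-commutative component $A=M_n(D)$ of $\Q G$ is neither a totally definite quaternion algebra nor exceptional of type~(2); then $A$ is an exceptional component of type~(1) or a non-exceptional component. As recalled earlier in the paper, the lattice $\U(\bo)$ then satisfies the Margulis Normal Subgroup Theorem \cite{margulis}, and in particular is Fab: every subgroup of finite index has finite abelianization. On the other hand, $\U(\bo)$ is infinite (because $A$ is non-commutative and not a totally definite quaternion algebra) and, being finitely generated and linear, is not torsion, so it contains an element $g$ of infinite order. A subgroup of finite index of $\U(\bo)$ is again Fab, so $\U(\bo)$ cannot have the virtual abelian retract property: a retraction of a finite index subgroup $H\ni g$ onto $\langle g\rangle\cong\Z$ would force $H$ to surject onto $\Z$, contradicting that $H$ has finite abelianization. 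By the reduction of the first paragraph, $\U(\Z G)$ then also fails to have the virtual abelian retract property, so (iii) fails. This closes the cycle (i)$\Rightarrow$(ii)$\Rightarrow$(iii)$\Rightarrow$(iv)$\Rightarrow$(i).

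I expect the two genuinely substantial ingredients to be the following. First, in the step (iii)$\Rightarrow$(iv), the claim that every exceptional component of type~(1), and every non-exceptional component, that can actually occur in the rational group algebra of a finite group produces a Fab unit group; this rests on the classification of the exceptional components of type~(1) occurring in rational group algebras of finite groups, together with the Margulis Normal Subgroup Theorem (and higher rank, respectively property~(T), of the relevant real Lie groups). Second, in the step (iv)$\Rightarrow$(i), the identification of $\U(\bo)$, for $A\cong M_2(H)$ with $H$ a totally definite quaternion $\Q$-algebra, with a standard arithmetic lattice of $\SO(1,5)$ via $\SL_2(\Hb)\cong\mathrm{Spin}(5,1)$, which is what makes \cite[Theorem~1.10]{BergeronHaglundWise2011} applicable to that component. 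The remaining points --- the commensurability of $\U(\Z G)$ with $\prod_i\U(\bo_i)$ and the stability of VCR and of virtual specialness under the operations used --- are routine.
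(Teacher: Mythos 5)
Your proposal follows essentially the same route as the paper, which deduces Theorem \ref{characterization} from the discussion preceding its statement: commensurability of $\U(\Z G)$ with a product of unit groups of orders in the Wedderburn components, Minasyan's results that VCR is equivalent to the virtual abelian retract property and is stable under commensurability and direct products, the Margulis Normal Subgroup Theorem (hence Fab) for the non-exceptional and type~(1) components, and virtual (compact) specialness of the lattices in the type~(2) components via Bergeron--Haglund--Wise. Two points where your write-up is thinner than the paper. First, a minor slip: for a totally definite quaternion algebra whose center is strictly larger than $\Q$ the unit group of an order is infinite; it is virtually abelian (the reduced norm one subgroup is finite and the central units have finite index modulo it), which still gives specialness and VCR, so nothing breaks. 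Second, and more substantively, in the case $A\cong M_2(H)$ with $H$ a totally definite quaternion algebra over $\Q$, the lattice a priori arises from the quaternionic construction, so its \emph{standardness} --- exactly what makes \cite[Theorem 1.10]{BergeronHaglundWise2011} applicable --- does not follow from the isogeny $\mathrm{Spin}(5,1)\to\SO(1,5)$ alone; you flag this identification as a key ingredient but do not justify it. The paper supplies the missing argument in Remark \ref{quaternions} (and inside the proof of Theorem \ref{good}(i)): a torsion-free finite index subgroup contains $\Z\times\Z$, hence is not word-hyperbolic, hence the corresponding hyperbolic manifold is non-closed, i.e.\ the lattice is non-uniform, and non-uniform arithmetic lattices of $\SO(1,5)$ are standard by \cite[Proposition 6.4.2]{Morris2015}. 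With that step added, your argument is complete and matches the paper's.
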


There exists a cohomological version of  Question $\widehat 1$. According to Serre a  group $G$ is called good if the cohomology of $G$ coincide with the cohomology of its profinite completion in any finite module. More precisely, $G$ is good if the natural homomorphism $G\longrightarrow \widehat G$ induces an isomorphism of cohomology groups
$$H^n(\widehat G,M)\longrightarrow H^n(G,M)$$ for any finite $G$-module $M$. In general it is not easy to decide whether a given group is good, for example it is an open question whether mapping class groups are  good. 

Note that conjugacy separability is not preserved  by passing to subgroup  or overgroups of finite index in general (see \cite{CZ2,G}). A group where the conjugacy separability holds for all finite index subgroups is called \emph{hereditarily conjugacy separable}.   
We prove that for the groups $G$ from \Cref{characterization} the group of units $\U(\Z G)$  is good, hereditarily conjugacy separable and determined by its finite quotients.

\begin{thm}\label{good}
	Let $G$ be a finite group such that every non-commutative simple quotient of its rational group algebra is either a totally definite quaternion algebra, or exceptional of type (2). Then 
	\begin{enumerate}
		\item[(i)] 
		$\U(\Z G)$ is good. In particular, the profinite completion of any torsion-free subgroup of $\U(\Z G)$  of finite index is torsion-free.
		
		\item[(ii)] $\U(\Z G)$ is hereditarily conjugacy separable.
	\end{enumerate}
\end{thm}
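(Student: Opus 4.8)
\emph{Step 1: reduction via the Wedderburn decomposition.} Write $\Q G\cong\prod_i M_{n_i}(D_i)$. Using the standard description of the unit group of an order (see \cite{JespersdelRio2016}), $\U(\Z G)$ is commensurable with $\Z^r\times\prod_i\SL_{n_i}(\mathcal{O}_i)$, where $\mathcal{O}_i$ is an order in $D_i$ and $r$ is the $\Z$-rank of the group of central units. By hypothesis every non-commutative $M_{n_i}(D_i)$ is either a totally definite quaternion algebra, in which case $\SL_1(\mathcal{O}_i)$ is finite and may be discarded, or is $M_2(\Q)$, $M_2(K)$ with $K$ imaginary quadratic, or $M_2(D)$ with $D$ a totally definite quaternion $\Q$-algebra. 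In these three cases $\SL_{n_i}(\mathcal{O}_i)$ is commensurable, respectively, with $\SL_2(\Z)$ (virtually free), with a Bianchi group $\SL_2(\mathcal{O}_K)$ (a non-uniform arithmetic lattice of $\SO(1,3)\cong\PSL_2(\C)$), or with $\SL_2(\mathcal{O}_D)$. For the last one I would use that $\SL_2(\Hb)$ is isogenous to $\SO(1,5)$, under which isogeny $\SL_2(\mathcal{O}_D)$ becomes commensurable with $\SO(q,\Z)$ for the rational quadratic form $q$ of signature $(1,5)$ obtained from the reduced norm on the six-dimensional space of $2\times2$ $D$-Hermitian matrices; hence $\SL_2(\mathcal{O}_D)$ is a \emph{standard} arithmetic lattice of $\SO(1,5)$, non-uniform because $q$ is $\Q$-isotropic. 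So it suffices to establish (i) and (ii) for the model group $\Pi=\Z^r\times\prod_j\Gamma_j$ in which each $\Gamma_j$ is of one of these three types.

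\emph{Step 2: part (i).} I would check that every factor of $\Pi$ is good in Serre's sense: $\Z^r$ is polycyclic; $\SL_2(\Z)$ is virtually free; and the two hyperbolic-lattice types are virtually compact special by \cite[Theorem~1.10]{BergeronHaglundWise2011}, hence virtual retracts of right-angled Artin groups (Haglund--Wise), hence good, since right-angled Artin groups are good and a retract of a good group is good. As goodness is inherited by finite direct products of groups of type $FP_\infty$ and by finite-index subgroups and overgroups, $\U(\Z G)$ is good. For the final clause, a torsion-free finite-index subgroup $U_0\le\U(\Z G)$ is good and of finite cohomological dimension $d$ (being commensurable with a direct product of duality groups), so $H^k(\widehat{U_0},M)\cong H^k(U_0,M)=0$ for all $k>d$ and all finite $M$; thus $\cd_p(\widehat{U_0})\le d$ for every prime $p$ and $\widehat{U_0}$ is torsion-free.

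\emph{Step 3: part (ii).} The essential observation is that each $\Gamma_j$ has a finite-index subgroup that is simultaneously compact special and toral relatively hyperbolic: for $\SL_2(\Z)$ one takes a finite-index free subgroup (hyperbolic, hence vacuously toral relatively hyperbolic); for the Bianchi and $\SO(1,5)$ types compact specialness is again \cite[Theorem~1.10]{BergeronHaglundWise2011}, and relative hyperbolicity with respect to the cusp subgroups, which are virtually abelian, holds because these lattices are non-uniform. Since "compact special" and "toral relatively hyperbolic" both descend to finite-index subgroups, every finite-index subgroup $K$ of $\U(\Z G)$ contains with finite index a group $\Z^{r'}\times\prod_j\Gamma_j'$ in which each $\Gamma_j'$ is compact special and toral relatively hyperbolic. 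Applying \Cref{GVCSHRNRH} --- in the form valid for finite direct products of such groups together with polycyclic groups, and for their finite-index overgroups --- to each $K$ in turn shows that every finite-index subgroup of $\U(\Z G)$ is conjugacy separable, i.e.\ $\U(\Z G)$ is hereditarily conjugacy separable.

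\emph{The main obstacle.} Two steps carry the weight. First, one must recognise the components $M_2(D)$ with $D$ totally definite over $\Q$ as \emph{standard} arithmetic lattices of $\SO(1,5)$, so that \Cref{standard arithmetic} --- equivalently \Cref{GVCSHRNRH} --- applies; this rests on the exceptional isogeny $\SL_2(\Hb)\sim\SO(1,5)$ and on the quaternion-Hermitian quadratic form. Second, and more seriously, conjugacy separability has to be transported through direct products: a non-trivial direct product is never relatively hyperbolic, so \Cref{GVCSHRNRH} cannot be invoked for $\U(\Z G)$, or for any of its finite-index subgroups, in a single step. The genuine task is therefore the finite-direct-product-and-finite-index strengthening of \Cref{GVCSHRNRH} --- equivalently, closure of hereditary conjugacy separability under finite direct products, combined with the finite-index-overgroup strength that \Cref{GVCSHRNRH} already provides.
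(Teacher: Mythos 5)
Your Step 1 and Step 2 are essentially the paper's own argument for part (i): reduce by commensurability to an abelian group times the groups $\SL_2(\mathcal{O}_i)$ coming from the exceptional type (2) components, then get goodness from \cite[Theorem 1.10]{BergeronHaglundWise2011} plus Haglund--Wise (virtual retracts of right-angled Artin groups) and \cite[Proposition 3.8]{MinasyanZalesskii2015}, together with closure of goodness under commensurability and finite products; your identification of $\SL_2(\mathcal{O}_D)$ as a \emph{standard} lattice via the rational signature-$(1,5)$ form on $2\times 2$ Hermitian matrices is a legitimate variant of the paper's Remark \ref{quaternions}, which instead deduces standardness from non-compactness (the lattice contains $\Z\times\Z$) and \cite[Proposition 6.4.2]{Morris2015}, and your deduction of torsion-freeness of $\widehat{U_0}$ from goodness and finite cohomological dimension is the intended one (the paper cites \cite[Corollary 4.3]{GrunewaldJaikinZalesskii2008} for the Bianchi case, but your route through virtual specialness also works).

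Part (ii), however, has a genuine gap, and it is exactly the point you defer to ``the genuine task'': you never prove the direct-product/finite-index form of \Cref{GVCSHRNRH} that Step 3 invokes, and it cannot be obtained by citing \Cref{GVCSHRNRH}, since a nontrivial direct product is never relatively hyperbolic and conjugacy separability does not in general pass to finite-index overgroups (the paper itself recalls this, citing \cite{CZ2,G}); for the same reason your plan of working with a model group $\Pi$ that is merely \emph{commensurable} with $\U(\Z G)$ cannot deliver (hereditary) conjugacy separability of $\U(\Z G)$ itself. The paper closes this with two moves absent from your proposal: (a) it takes $R_i=\Z Ge_i$ and uses the exact embedding $x\mapsto (xe_1,\dots,xe_n)$ to realise $\U(\Z G)$ as a \emph{finite-index subgroup} of $\prod_i\U(R_i)$ (two orders in $\Q G$ have commensurable unit groups), so that only downward finite-index passage is needed, which is automatic for the hereditary property and requires no overgroup step at all; and (b) it invokes Ferov's theorem \cite[Theorem 1.2]{Ferov2016}, that a direct product of hereditarily conjugacy separable groups is hereditarily conjugacy separable, applied to the factors $\U(R_i)$, whose hereditary conjugacy separability comes from \Cref{standard arithmetic} together with Remark \ref{quaternions} (the class of standard arithmetic lattices is closed under commensurability, and the commutative and totally definite quaternion components only contribute virtually abelian factors). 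Without Ferov's theorem, or an independent proof of its content, your Step 3 does not go through.
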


\begin{thm}\label{Units}
	Let $G$ be a finite group such that every non-commutative simple quotient of its rational group algebra is either a totally definite quaternion algebra, or exceptional of type (2). Then the following conditions are equivalent for another finite group $H$:
	\begin{enumerate}
		\item $G\cong H$.
		\item $\Z G\cong \Z H$.
		\item $\U(\Z G)\cong \U(\Z H)$.
		\item The profinite completions of $\U(\Z G)$ and $\U(\Z H)$ are isomorphic.
		\item $\U(\Z G)$ and $\U(\Z H)$ have the same finite epimorphic images. 
	\end{enumerate}
\end{thm}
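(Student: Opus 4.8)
The plan is to prove \Cref{Units} by establishing the chain of implications $(1)\Rightarrow(2)\Rightarrow(3)\Rightarrow(4)\Rightarrow(5)\Rightarrow(3)\Rightarrow(1)$, where the genuinely new content lies in closing the loop from profinite data back to the integral group ring. The implications $(1)\Rightarrow(2)$ (functoriality of the group-ring construction) and $(2)\Rightarrow(3)$ (an isomorphism of rings restricts to an isomorphism of their unit groups) are immediate; $(3)\Rightarrow(4)$ is trivial since isomorphic groups have isomorphic profinite completions; and $(4)\Leftrightarrow(5)$ is the standard fact, cited in the introduction via \cite[Corollary 3.2.8]{RibesZalesskii2010}, that two finitely generated groups have isomorphic profinite completions if and only if they have the same finite quotients, applicable because $\U(\Z G)$ and $\U(\Z H)$ are finitely generated. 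Finally $(3)\Rightarrow(1)$ is already known: by \cite[Proposition~2.2]{Jespers2021} an isomorphism $\U(\Z G)\cong\U(\Z H)$ forces $\Z G\cong\Z H$, but to recover $G\cong H$ itself I would invoke the hypothesis on the rational group algebra — the point is that under the standing assumption on $G$ (every noncommutative simple quotient of $\Q G$ is a totally definite quaternion algebra or exceptional of type (2)), the group $G$ is in the class for which the integral group ring does determine the group; this is the isomorphism-problem input we are allowed to cite, and the hypothesis must be used to avoid the known counterexamples of \cite{Jespers2021}.

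The crux, therefore, is the implication $(4)\Rightarrow(3)$: from $\widehat{\U(\Z G)}\cong\widehat{\U(\Z H)}$ deduce $\U(\Z G)\cong\U(\Z H)$. Here I would use \Cref{good}: under the hypothesis, $\U(\Z G)$ is good and hereditarily conjugacy separable, and more structurally it is virtually special (by \Cref{characterization}, since the hypothesis is exactly condition (iv)). The strategy is to show that a virtually special group with the relevant rigidity — concretely, one that is commensurable with a direct product of (i) free-abelian groups, (ii) $\SL_2$-type arithmetic lattices or lattices in exceptional algebras of type (2), and (iii) finite groups, all of which are known to be profinitely rigid within the class of finitely generated residually finite groups, or at least profinitely rigid relative to one another — is determined by its profinite completion. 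I would decompose $\U(\Z G)$ up to commensurability using the description of unit groups coming from the Wedderburn decomposition of $\Q G$ (each exceptional-type-(2) or totally-definite-quaternion component contributing a specific, profinitely recognizable arithmetic factor), observe that the profinite completion $\widehat{\U(\Z G)}$ records this decomposition, and then reconstruct $\U(\Z G)$ from it.

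The hard part will be precisely this reconstruction step: showing that the profinite completion detects the commensurability class and, within it, the isomorphism type rather than merely an abstract commensurator. Goodness is what makes this tractable — it guarantees (as stated in \Cref{good}(i)) that torsion-free finite-index subgroups stay torsion-free in the profinite completion, so that one can pass to a torsion-free finite-index subgroup $\Gamma\le\U(\Z G)$ and recover its cohomological dimension and Euler characteristic profinitely, which pins down the number and type of the factors. The remaining delicate issue is rigidity of the individual arithmetic factors: lattices in $\SL_2(\C)$ and in exceptional algebras of type (2) must be shown to be profinitely distinguished from one another and profinitely rigid in the required sense, which is where I expect to lean on the conjugacy-separability and virtual-specialness results of this paper together with known profinite rigidity results for such small-rank arithmetic groups. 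Assembling these factor-wise rigidity statements into a global isomorphism $\U(\Z G)\cong\U(\Z H)$ — rather than just a commensurability — is the main technical obstacle, and I would handle it by working at the level of the canonical decomposition of the unit group dictated by $\Q G$, showing each piece is reconstructed and the assembly is canonical.
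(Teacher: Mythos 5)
Your reduction of everything to the implication $(4)\Rightarrow(3)$, to be proved by profinite rigidity of the arithmetic factors, is where the argument breaks down. Profinite rigidity of the lattices that occur here --- finite-index subgroups of $\SL_2$ of orders in $\Q$, in imaginary quadratic fields, and in totally definite quaternion algebras, i.e.\ rank-one arithmetic lattices --- is not a known result; it is a well-known open problem, settled only for a handful of specific examples, and certainly not available ``within the class of finitely generated residually finite groups'' as your plan requires. Recovering the commensurability class from $\widehat{\U(\Z G)}$ and then pinning down the isomorphism type by cohomological dimension and Euler characteristic also does not work: these invariants do not separate non-isomorphic lattices in the same commensurability class. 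Moreover you are attempting a much stronger statement than the theorem needs: the hypothesis is that $H$ is another \emph{finite group} and one compares $\widehat{\U(\Z G)}$ with $\widehat{\U(\Z H)}$, not that $\U(\Z G)$ is profinitely rigid among all finitely generated residually finite groups. Your sketch of $(3)\Rightarrow(1)$ has the same vagueness: ``the class for which $\Z G$ determines $G$'' is never identified, and identifying it is exactly the structural content that is missing from your proposal.

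The paper's route avoids rigidity altogether. By \Cref{ExceptionalComponents} (resting on Amitsur's classification \cite{Amitsur1955} and \cite{EiseleKieferVanGelder2015}), each projection $Ge_i$ has an abelian normal subgroup with quotient of exponent dividing $4$, so $G$ is metabelian; this gives $(2)\Rightarrow(1)$ by Whitcomb \cite{Whitcomb1968}, and by Sehgal \cite[Theorem~31.1]{Sehgal1993} it gives a torsion-free normal complement, $\U(\Z G)=N\rtimes G$ with $N$ of finite index. Goodness (\Cref{good}(i)) then makes $\widehat N$ torsion free, and from an isomorphism $\widehat{\U(\Z G)}\cong\widehat{\U(\Z H)}$ one composes $H\hookrightarrow \widehat{\U(\Z H)}\rightarrow \U(\Z G)/N\cong G$; the kernel of the second map is $\widehat N$, which is torsion free, so the finite group $H$ embeds in $G$, and by symmetry $G$ embeds in $H$, whence $G\cong H$ and hence $\U(\Z G)\cong\U(\Z H)$. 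So the missing idea in your proposal is precisely this semidirect decomposition together with torsion-freeness of $\widehat N$, which substitutes entirely for the (unavailable) factor-wise profinite rigidity on which your reconstruction was to rest.
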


Finally note that Menny Aka \cite{A} proved that  the profinite completion determines an  arithmetic group having  Congruence Subgroup Property up to finitely many non isomorphic arithmetic groups. This shows that if $U(\Z G)$ has only one non-abelian not exceptional component, then  there are only finitely many groups $H$ such that $\widehat{U(\Z H)}\cong \widehat{U(\Z G)}$.

\bigskip 
The paper is organized as follows. In Section 2 we introduce the necessary terminology and notation. Section 3 is dedicated to the conjugacy separability, where we prove Theorems  \ref{GVCSHRNRH} and \ref{standard arithmetic}. Section 4  deals with units of group rings: the proofs of Theorems  \ref{good} and \ref{Units} can be found there. In Section 5 we recall the statement of the Congruence Subgroup Problem for arithmetic groups and units of group rings and use Theorem \ref{good} to bound the  virtual cohomological dimension of the congruence kernel  of the unit group of the group rings from Theorem \ref{good}.

\section*{Acknowledgement}
The third author thanks Ashot Minasyan and Andrei Rapinchuk  for  fruitful discussions during the work on this paper.

\section{Notation and preliminaries}\label{SectionPreliminaries}

Let $n$ be an non-zero integer. Then $\zeta_n$ denotes a complex primitive root of unity.
If $p$ is a prime integer then $v_p(n)$ denotes the valuation of $n$ at $p$, i.e. the maximum non-negative integer $k$ with $p^k$ dividing $n$.
If $m$ is another integer coprime with $n$ then $o_m(n)$ denotes the order of $n$ modulo $m$, i.e. the minimum positive integer $k$ with $n^k\equiv 1 \mod m$.

We use standard group theoretical notation, for example, if $g$ and $h$ are elements of a group $G$ then $g^h=h^{-1}gh$, $C_G(g)$ denotes the centralizer of $g$ in $G$, $g^G$ denotes the conjugacy class of $g$ in $G$ and for a subgroup $H$ of $G$, $N_G(H)$ denotes the normalizer of $H$ in $G$. We use $N:H$ for extension of groups and $G\rtimes H$ for split extensions, i.e. $N:H$ denotes a group $G$ with a normal subgroup $N$ such that $G/N\cong H$, and $G=N\rtimes H$ when $H$ is a normal complement of $N$ in $G$. The profinite completion of $G$ is denoted $\widehat{G}$ and if $X$ is subset of $G$ then $\overline{X}$ denotes the closure of the natural image of $X$ in $\widehat G$.

For a positive integer $n$, $C_n$, $D_n$ and $Q_n$ denotes the cyclic, dihedral and quaternion groups of order $n$; $S_n$ and $A_n$ denote the symmetric and alternating groups on $n$ symbols; and for a commutative ring $R$ we use the standard notation for the general linear group $\GL_n(R)$, the projective general linear group $\PGL_n(R)$, the special linear group $\SL_n(R)$ and the projective special linear group $\PSL_n(R)$. In case $R$ is a field with  $q$ elements then the latter groups are denoted by $\GL(n,q), \PGL(n,q), \SL(n,q)$ and $\PSL(n,q)$, respectively. Moreover, for $n$ small enough $\SG[n,m]$ denotes the $m$-th group of order $n$ in the GAP library of small groups, i.e. \verb+SmallGroup(n,m)+ in GAP terminology.

The \emph{Wedderburn decomposition} of a semisimple ring $A$ is its expression as a direct product of simple artinian rings. We refer to this simple factors as \emph{Wedderburn components} of $A$. The GAP package \texttt{Wedderga} \cite{Wedderga} contains some functions which compute the Wedderburn decomposition of groups algebras of finite groups. It is based on the methods introduced in \cite{OlivieridelRioSimon2004} and extended in \cite{Olteanu2007}. A fundamental notion of these methods is that of strong Shoda pair of a group $G$. Associated to every strong Shoda pair $(H,K)$ of $G$ there is a Wedderburn component of $\Q G$ whose structure can be explicitly given (see  \cite[Section~3.5]{JespersdelRio2016} for details). This will be used in the proof of \Cref{ExceptionalComponents}.

The group of units of a non-necessarily commutative ring $R$, is denoted by $\U(R)$. 
In case $R$ is  a subring of a finite dimensional rational algebra $A$ then $\SL_n(R)$ denotes the group of elements of $M_n(R)$ with reduced norm $1$ in $M_n(A)$.

If $E/F$ is a Galois extension of fields of order $n$ with Galois group generated by $\sigma$ and $a\in \U(F)$ then $(E/F,a)$ denotes the cyclic algebra, i.e. $$(E/F,a)=F[u:u^{-1}xu=\sigma(x) (x\in F), u^n=a].$$

The \emph{quaternion algebras} over a field $F$ of characteristic different from $2$ are defined as
$$\quat{a,b}{F} = F[i,j:i^2=a,j^2=b,ji=-ij],$$
for $a,b\in \U(F)$.
This quaternion algebra is said to be \emph{totally definite} when $F$ is a totally real number field and $a$ and $b$ are totally positive.
We use classical notation for the Hamiltonian quaternion algebra: $\HQ(F)=\quat{-1,-1}{F}$.

If $q$ is a quadratic form defined over a commutative ring $R$ then  $\SO(q;R)$ denotes the corresponding special orthogonal group. For positive integers $n$ and $m$, $$\SO(n,m)=\SO\left(\sum_{i=1}^n x_i^2-\sum_{j=1}^m x_j^2;\R\right).$$

\begin{deff} Let $G\subseteq  GL_n(\mathbb{C})$ be a linear algebraic group defined over $\mathbb{Q}$. A subgroup $\Gamma\subseteq G $ is \emph{arithmetic} if it is commensurable with $G_{\mathbb{Z}} = G\cap  GL_n(\mathbb{Z})$, i.e., if $\Gamma\cap G_{\mathbb{Z}}$ has finite index both in $\Gamma$ and in $G_{\mathbb{Z}}$.
\end{deff}

Note that by the fundamental theorem of Borel and Harish-Chandra if $G$ is semisimple then  $\Gamma$ is a lattice in $G$ (i.e. a discrete subgroup of finite covolume).

We give now a description of the class of arithmetic lattices to which the main theorem applies. 

Let $f$ be a quadratic form of signature $(n, 1)$ in $n+ 1$ variables with coefficients
in a totally real algebraic number field $K \subset \R$ satisfying the following condition:
 For every nontrivial (i.e., different from the identity) embedding $\sigma : K \rightarrow \R$
the quadratic form $f^\sigma$
is positive definite.
Let $A$ denote the ring of integers of $K$. We define the group $\Gamma := SO(f, A)$
consisting of matrices of determinant 1 with entries in $A$ preserving the form $f$. Then $\Gamma$ is a
discrete subgroup of $SO(f, \R)$. Moreover, it is a lattice. Such groups $\Gamma$ (and groups commensurable to them) are called \emph{standard arithmetic subgroups} of  $SO(n, 1$) (or \emph{of the simplest type}).
Note that if $\Gamma$  is non-uniform  or $n$ is even, then   $\Gamma$ is a standard arithmetic lattice (see \cite[Remark 2.1]{KPV}).

In general, for n odd, $\SO(1,n)$ has another type of cocompact arithmetically defined subgroups, given as the groups
of units of appropriate skew-Hermitian forms over quaternionic algebras and 
other families of arithmetic lattices exist for n = 3 and n = 7. For $n\neq 3$ It is not known whether these groups are virtually compact special, so they are not considered in this paper.

\section{Conjugacy separability}\label{sec:proof}

Recall that the profinite topology on a group $G$ is a topology for which the set of all normal subgroups of finite index is taken as a base of the identity.

We say that an element $g$ of a group $G$ is \emph{conjugacy distinguished} in $G$ if  its conjugacy class $g^G$  is closed in the profinite topology of $G$. 
For residually finite $G$ this exactly means $g^{\widehat{G}}\cap G= g^G$. 
Note that $G$ is \emph{conjugacy separable} if and only if every element of $G$ is conjugacy distinguished.  

Recall that a subgroup $H$ of $G$ is \emph{separable} in $G$ if for every $g\in G\setminus H$ there is a normal  subgroup of finite index $K$ in $G$ such that $g\not\in KH$. 

The following proposition follows directly from the proof of \cite[Proposition~2.1]{ChagasZalesskii2010}, having in mind that as $H$ is assumed to be hereditarily conjugacy separable we have that $C_H(x^m)$ is dense in $C_{\widehat H}(x^m)$ \cite[Corollary~12.3]{Minasyan2012}.

\begin{prop}\label{prop:C-Z_crit} Let $H$ be a normal subgroup of index $m \in \N$ in a group $G$ and let $x \in G$. Suppose that $H$ is hereditarily conjugacy separable
	and  the following conditions hold:
	\begin{itemize}
		\item[(i)] $x$ is conjugacy distinguished in $C_G(x^m)$;
		\item[(ii)] each finite index subgroup of  $C_G(x^m)$ is separable in $G$.
	\end{itemize}
	Then $x$ is conjugacy distinguished in $G$.
\end{prop}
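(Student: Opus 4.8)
The plan is to verify that $x$ and its images behave coherently under the projection $G \to G/H$ and that the hypothesis on $C_G(x^m)$ suffices to lift conjugacy information from finite quotients. First I would set $N = C_G(x^m)$ and observe that $x^m \in N$ and that $N$ has finite index in $C_G(x^m)$ trivially, but more importantly that $N \cap H = C_H(x^m)$ has finite index in $N$ (since $[G:H]=m$). The key structural point is that conjugation by any element of $N$ fixes $x^m$, so one can compare the conjugacy class of $x$ inside $N$ with that inside $G$: if $g \in G$ with $x^g = y$, then $(x^m)^g = y^m$, and when $y$ is in the profinite closure of $x^G$ one wants to show $g$ can be chosen to fix $x^m$, i.e. chosen in $C_G(x^m) = N$. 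This is the reduction that makes condition (i) available.

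Next I would work in $\widehat{G}$: assume $y \in x^{\widehat G} \cap G$, so $y = x^{\hat g}$ for some $\hat g \in \widehat G$. Then $y^m = (x^m)^{\hat g}$, so $\hat g$ conjugates $x^m$ to $y^m$ in $\widehat G$. Since $H$ is hereditarily conjugacy separable and $x^m$ (up to replacing $x$ by a conjugate, using that $x^m \in H$ would need justification — actually $x^m \in H$ holds because $G/H$ has exponent dividing $m$) lies in $H$, conjugacy separability of $H$ together with \cite[Corollary~12.3]{Minasyan2012} gives that $C_{\widehat H}(x^m) = \overline{C_H(x^m)}$. I would then use a standard argument (as in \cite[Proposition~2.1]{ChagasZalesskii2010}) to replace $\hat g$ by an element $\hat g_1 \in C_{\widehat G}(x^m)$ with $x^{\hat g_1} = y$ still: the point is that $\hat g$ and the correcting element differ by something in $C_{\widehat G}(x^m)$, and density lets one arrange $\hat g_1$ to project correctly. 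Hence $y \in x^{\,\overline{C_G(x^m)}} \cap G$, where the closure is taken in the profinite topology induced from $\widehat G$.

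Now condition (ii) enters: since every finite index subgroup of $C_G(x^m)$ is separable in $G$, the profinite topology of $G$ restricted to $C_G(x^m)$ coincides with the full profinite topology of $C_G(x^m)$; equivalently, the natural map $\widehat{C_G(x^m)} \to \widehat{G}$ is injective with image the closure $\overline{C_G(x^m)}$. Therefore $x^{\,\overline{C_G(x^m)}} \cap C_G(x^m)$, computed in $\widehat G$, equals $x^{\,\widehat{C_G(x^m)}} \cap C_G(x^m)$. Combining with condition (i) — that $x$ is conjugacy distinguished in $C_G(x^m)$, i.e. $x^{\,\widehat{C_G(x^m)}} \cap C_G(x^m) = x^{C_G(x^m)}$ — we get $y \in x^{C_G(x^m)} \subseteq x^G$. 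Since $y$ was an arbitrary element of $x^{\widehat G} \cap G$, this shows $x^{\widehat G} \cap G = x^G$, i.e. $x$ is conjugacy distinguished in $G$.

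The main obstacle I expect is the second step: carefully producing the conjugator $\hat g_1 \in C_{\widehat G}(x^m)$ from an arbitrary conjugator $\hat g$. One needs that the set of elements of $\widehat G$ conjugating $x^m$ to $y^m$ is a coset of $C_{\widehat G}(x^m)$, and then that within this coset one can find an element that also sends $x$ to $y$ — this uses the density $C_{\widehat H}(x^m) = \overline{C_H(x^m)}$ in an essential way, together with an approximation argument over the finite quotients of $G$, exactly as extracted from the proof of \cite[Proposition~2.1]{ChagasZalesskii2010}. The rest is bookkeeping with profinite topologies and separability.
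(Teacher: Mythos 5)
Your proposal is correct in substance and follows essentially the same route as the paper, whose own justification is simply to run the proof of \cite[Proposition~2.1]{ChagasZalesskii2010} with the extra input that hereditary conjugacy separability of $H$ gives $\overline{C_H(x^m)}=C_{\widehat H}(x^m)$ by \cite[Corollary~12.3]{Minasyan2012}; your use of (ii) to identify $\overline{C_G(x^m)}$ with $\widehat{C_G(x^m)}$ and of (i) to conclude is exactly the intended argument. The one imprecision is the middle step: a conjugator $\hat g_1\in C_{\widehat G}(x^m)$ with $x^{\hat g_1}=y$ forces $y^m=x^m$, so one cannot keep $y$ fixed; instead one first writes $\widehat G=G\widehat H$ to assume the conjugator lies in $\widehat H$, uses conjugacy separability of $H$ on $x^m$ and $y^m$ to replace $y$ by an $H$-conjugate satisfying $y^m=x^m$ (which also puts $y$ in $C_G(x^m)$), and only then invokes the density of $C_H(x^m)$ in $C_{\widehat H}(x^m)$ — after which your argument goes through as written.
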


\begin{deff}\label{virtspe} A group $G$ is said to be virtually compact special if it has a finite index subgroup isomorphic to the fundamental group of a compact special cube complex (see \cite{H-W-1} for details).

\end{deff} 
\begin{deff}\label{DefToral} A group $G$ is called  toral relatively hyperbolic
if $G$ is torsion-free, and hyperbolic relative to a finite set of finitely generated abelian
subgroups.\end{deff}

\begin{deff} A subgroup $H$ of a group $G$ is said to be malnormal if $H\cap H^g=1$ for any $g\in G\setminus H$.\end{deff}

\begin{lemma}\label{lem:inf_order} Let $G$ be  a virtually compact special toral relatively hyperbolic group.  Let $g \in G$ be an element of infinite order. Then $g$ is conjugacy distinguished in $G$.
\end{lemma}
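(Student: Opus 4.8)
The plan is to reduce the general case to the special case where $G$ is torsion-free and compact special (not merely virtually so), then invoke the known conjugacy separability of virtually compact special groups on suitable finite-index subgroups via the transfer criterion of \Cref{prop:C-Z_crit}. First I would fix a finite-index subgroup $H \le G$ that is the fundamental group of a compact special cube complex; since $G$ is toral relatively hyperbolic, after replacing $H$ by its normal core we may assume $H \trianglelefteq G$, that $H$ is itself toral relatively hyperbolic (relative hyperbolicity and torsion-freeness pass to finite-index subgroups, the parabolics being the intersections with conjugates of the original ones), and that $H$ is compact special. Set $m = [G:H]$. The element $g \in G$ has infinite order, so $g^m \in H$ is a nontrivial element; I want to understand $C_G(g^m)$.

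The key structural input is that in a torsion-free relatively hyperbolic group, a nontrivial element $u$ is either \emph{hyperbolic} (not conjugate into any parabolic) or \emph{parabolic} (conjugate into a unique parabolic subgroup, up to the obvious ambiguity). In the hyperbolic case, $C_G(u)$ is virtually cyclic, hence — being torsion-free — infinite cyclic; in the parabolic case, $C_G(u)$ lies in (a conjugate of) the corresponding abelian parabolic, since the parabolics are malnormal and abelian. Applying this to $u = g^m$ inside $H$ first: $C_H(g^m)$ is either infinite cyclic or a finitely generated abelian group. Either way $C_H(g^m)$ is polycyclic, hence $C_G(g^m)$, which contains $C_H(g^m)$ with index dividing $m$, is polycyclic-by-finite and in particular is (hereditarily) conjugacy separable and slender. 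This immediately gives condition (i) of \Cref{prop:C-Z_crit}: $x := g$ is conjugacy distinguished in the virtually polycyclic group $C_G(g^m)$. It also makes condition (ii) plausible: every finite-index subgroup of $C_G(g^m)$ is again virtually polycyclic, in fact virtually cyclic or virtually abelian, and one needs it to be separable in $G$.

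The main obstacle is precisely the separability condition (ii): showing that each finite-index subgroup of $C_G(g^m)$ is closed in the profinite topology of $G$. For this I would exploit that $G$ is virtually compact special, hence (by Haglund--Wise and the work of Minasyan--Zalesskii) has very strong subgroup separability properties — in particular virtually cyclic and more generally finitely generated abelian (indeed quasiconvex, and relatively quasiconvex) subgroups are separable, and one can upgrade separability of the ambient abelian/virtually-cyclic centralizer to separability of its finite-index subgroups using that double cosets $HgK$ behave well. Concretely: $C_G(g^m)$ is relatively quasiconvex in $G$ (a cyclic subgroup generated by a hyperbolic element is quasiconvex, a parabolic centralizer sits in a peripheral subgroup), virtually special groups that are relatively hyperbolic are GFERF / have separable relatively quasiconvex subgroups, and separability is inherited by finite-index oversubgroups within a group in which the relevant quasiconvex subgroups are separable. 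Granting (i) and (ii), \Cref{prop:C-Z_crit} yields that $g$ is conjugacy distinguished in $G$, completing the proof. The residual finiteness needed to interpret ``conjugacy distinguished'' is automatic since virtually compact special groups are residually finite.
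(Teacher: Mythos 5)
Your plan is essentially correct, but it proves the lemma by a different route than the paper does. You run everything through the criterion of \Cref{prop:C-Z_crit}: take the normal core $H$ of the compact special subgroup, note $g^m\in H$ for $m=[G:H]$, use the toral relatively hyperbolic structure to see that $C_H(g^m)$ is either infinite cyclic (hyperbolic case, Osin) or lies in an abelian peripheral subgroup (parabolic case, via malnormality), so that $C_G(g^m)$ is virtually polycyclic and hence conjugacy separable (condition (i)), and then get condition (ii) from relative quasiconvexity of the centralizer plus separability of relatively quasiconvex subgroups (Minasyan--Osin and Groves--Manning). This is in fact the same machinery the paper deploys for \emph{torsion} elements in \Cref{relhypvirtualspecial}; for the infinite-order case the paper instead argues directly in the profinite completion: writing $\widehat G=G\widehat H$ it reduces a conjugator $\gamma\in\widehat G$ to an element of $C_{\widehat H}(g^m)$ using conjugacy separability of $H$, identifies $C_{\widehat H}(g^m)$ with the closure of $C_H(g^m)$ via Minasyan's hereditary conjugacy separability results, uses cyclic subgroup separability of virtually special groups (Chagas--Zalesskii) to see that this closure is the full profinite completion of the (virtually cyclic or abelian) centralizer, and concludes from conjugacy separability of virtually abelian/virtually cyclic groups. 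Your version is more uniform (both cases and the later torsion argument follow one template) but leans on the heavier Groves--Manning separability theorem here as well, whereas the paper's direct argument avoids \Cref{prop:C-Z_crit} and Groves--Manning for this lemma at the cost of explicit bookkeeping with closures in $\widehat G$. Two points you should make explicit if you write this up: the hypothesis of \Cref{prop:C-Z_crit} that $H$ is \emph{hereditarily} conjugacy separable (this is Minasyan's theorem for compact special groups, not just conjugacy separability), and the fact that $G$ itself (a finite, possibly non-torsion-free extension of $H$) is relatively hyperbolic so that the quasiconvexity and separability results apply to $C_G(g^m)$ in $G$ rather than only to $C_H(g^m)$ in $H$; also note that only $H$, not $G$, is assumed toral relatively hyperbolic, so the phrase ``since $G$ is toral relatively hyperbolic'' should be corrected, though this does not affect the argument.
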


\begin{proof}  Let $g\in G$ be an infinite order element. 
Let  $H$  be  a finite index subgroup of  $G$ such that $H$ is a compact special and toral relatively hyperbolic. 
By  \cite[Corollary 2.2]{Minasyan2012}, $H$ is hereditarily conjugacy separable. 

Since $g$ is of  infinite order, $g^m\in H$ for some   integer $m$.  
Let $\gamma$ be a element of $\widehat{G}$ such that $g^\gamma\in G$. Observe that $\widehat{G}= G\widehat{H}$, so we can write $\gamma= \gamma_0\delta$, where  $\gamma_0\in G$ and $\delta\in\widehat{H}$.  Now substituting $g$ by $g^\delta$, we can suppose that $\gamma\in \widehat{ H}$. Since $H$ is conjugacy separable, $g^m$ and  $(g^\gamma )^m$ are conjugate in $H$, so we may assume that $g^m = (g^m)^\gamma$ and so $\gamma\in C_{\widehat H}(g^m)$.

We have two cases to consider: \\

\noindent {\bf Case 1:}  Suppose that $g^m$ is  conjugate to an element of a parabolic  subgroup of  $H$.  We may assume then that $g^m$ is in some parabolic subgroup $P$, and since parabolic subgroups are virtually malnormal (See \cite[Remark 2.2]{GrovesManning2020}),  $C_H(g^m)$ is   contained    in $P$. Then $\overline{C_H(g^m)}\leq \overline P$ and since  $\overline{C_H(g^m)}= C_{\widehat H}(g^m)$ by \cite[Proposition~3.2]{Minasyan2012}, we may deduce that $\gamma$ belongs to the closure of this parabolic subgroup in $\widehat G$.  One deduces from the cyclic subgroup separability of virtually special groups (see \cite[Theorem 1.4]{ChagasZalesskii2013}) that the closure of a free abelian group in $\widehat H$ is a free profinite abelian group, i.e. the closure coincides with its profinite completion. Then from conjugacy separability of virtually abelian groups one deduces  the result.

\noindent {\bf Case 2:} Suppose now that $g^m$ is not  conjugate to an element of any parabolic subgroup of $H$. So $g^m$ is a hyperbolic element and,  by  \cite[Theorem 4.3]{Osin2006} $C_G(g^m)$ is virtually cyclic and so is conjugacy separable. By \cite[Corollary 12.3 and Corollary 12.2]{Minasyan2012}, $C_G(g^m)$ is dense in $C_{\widehat G}(g^m)$, and by \cite[Theorem 1.4]{ChagasZalesskii2013}, its closure coincides with the profinite completion. Thus  we deduce from conjugacy separability of  $C_G(g^m)$ that $g$ and $g^\gamma$ are conjugate in $C_G(g^m)$ in this case.
This finishes the proof.
\end{proof}

\begin{rem}\label{not cocompact} In the proof of Lemma \ref{lem:inf_order} we actually used less than given in the hypothesis. The lemma is still valid if we assume that $H$ is a retract of a virtually compact special group (or equivalently  is a retract of a right angled Artin group). Indeed, what we use is that $H$ is conjugacy separable, cyclic subgroup separable and that $g$ and $g^\gamma$ are conjugate for any $\gamma\in C_{\widehat G}(g^m)$. The first two properties also hold for retracts of virtually compact special groups.  Now $C_H(g^m)$ is virtually cyclic and  is dense in $C_{\widehat H}(g^m)$. But $\gamma\in C_{\widehat H}(g^m)$ and as $C_{\widehat H}(g^m)\leq C_{\widehat H}(g)$ we deduce that $g=g^\gamma$. 
\end{rem}

\begin{prop}\label{relhypvirtualspecial} Let  $G$ be a virtual compact special  toral relatively hyperbolic group.  Suppose that  $G$ splits as a semidirect product $G=H \rtimes \langle x \rangle$, where $H$ is torsion-free and $x$ has finite order.  Then any torsion element of
	 $G$ is conjugacy distinguished.
\end{prop}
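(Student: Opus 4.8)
The plan is to reduce the statement to an application of \Cref{prop:C-Z_crit} with the normal subgroup being $H$ (of finite index $m = |x|$ in $G$, since $G = H \rtimes \langle x \rangle$) and the element being an arbitrary torsion element $g \in G$. Note that $g^m \in H$; but $H$ is torsion-free, so $g^m = 1$ and hence $C_G(g^m) = C_G(1) = G$. Thus hypothesis (ii) of \Cref{prop:C-Z_crit} becomes the statement that every finite index subgroup of $G$ is separable in $G$ — which holds because virtually compact special groups are subgroup separable (LERF), so in particular every finite index subgroup is separable. Hypothesis (i) becomes the statement that $g$ is conjugacy distinguished in $C_G(g^m) = G$ — which is exactly the conclusion we want, so \Cref{prop:C-Z_crit} appears to collapse. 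The resolution is that \Cref{prop:C-Z_crit} is the wrong tool here; instead the honest work is to prove directly that a torsion element of $G$ is conjugacy distinguished, and the above observation ($g^m = 1$ hence $C_G(g^m) = G$) already tells us the criterion gives nothing. So I will argue by hand.

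First I would pin down the structure. Let $g \in G$ have finite order. Since $H$ is torsion-free and normal of finite index $m$, the image of $g$ in $G/H \cong \langle x \rangle$ is a nontrivial element (unless $g = 1$, which is trivial), say of order $d > 1$ dividing $m$; replacing $x$ by a power and $g$ by a conjugate we may assume $g \in H x^{k}$ for a suitable $k$, and in fact $g$ generates together with a subgroup of $H$ a finite subgroup that maps isomorphically onto its image in $G/H$. More useful: let $\gamma \in \widehat{G}$ with $g^\gamma \in G$; I must show $g^\gamma \in g^G$. Using $\widehat{G} = G \widehat{H}$ (valid because $H$ has finite index), write $\gamma = \gamma_0 \delta$ with $\gamma_0 \in G$, $\delta \in \widehat{H}$; after replacing $g$ by $g^{\gamma_0}$ we may assume $\gamma \in \widehat{H}$. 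Now $g$ and $g^\gamma$ both lie in the coset $H x^j$ for some fixed $j$ (since conjugation by $\widehat{H}$ preserves cosets of $\overline{H}$, and $g^\gamma \in G$), and in particular $g^\gamma \in G$ together with $g^\gamma \in \overline{H} g$ gives $g^\gamma \in H g$, i.e. $g^\gamma = h g$ for some $h \in H$.

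The heart of the matter is then a statement about the action of $\langle x \rangle$ on $H$: I want to show that if $g, g' = hg$ are conjugate by an element of $\widehat{H}$ then they are conjugate by an element of $H$. Here I would invoke that $H$ is hereditarily conjugacy separable (it is compact special and toral relatively hyperbolic, so \cite[Corollary 2.2]{Minasyan2012} applies — this was used in \Cref{lem:inf_order}), and more specifically that centralizers are dense: $\overline{C_H(g)} = C_{\widehat H}(g)$, using \cite[Proposition 3.2]{Minasyan2012} applied to the finite order automorphism $c_g$ of $H$ (conjugation by $g$), or equivalently working inside the finite-index virtually-$H$ overgroup $\langle H, g\rangle$. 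The cleanest route: consider the finite group $\langle g \rangle$ acting on $H$; the twisted conjugacy / fixed point data of this action in $\widehat{H}$ matches that in $H$ precisely because $H$ is (hereditarily) conjugacy separable and its finite-index subgroups are separable in $G$. Concretely, $g^\gamma = g$ for $\gamma \in \widehat H$ means $\gamma \in C_{\widehat H}(g) = \overline{C_H(g)}$, and then approximating $\gamma$ by elements of $C_H(g)$ and using that the original conjugating element came from $\widehat H$ controlling $g$ up to $H$-conjugacy, one concludes $g^\gamma \in g^G$.

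The main obstacle I expect is exactly the interaction between the finite-order automorphism $c_x$ of the torsion-free group $H$ and the profinite topology: I need that the $\langle x \rangle$-twisted conjugacy classes in $H$ are separable, equivalently that $H \rtimes \langle x \rangle$-conjugacy of torsion elements is detected in finite quotients. This should follow from hereditary conjugacy separability of $H$ combined with the fact that $G$ itself is virtually compact special (hence subgroup separable and, by \cite[Corollary 2.2]{Minasyan2012} applied to a torsion-free finite-index subgroup together with a Grossman-type argument, one bootstraps up), but making the bootstrap precise — passing from the conjugacy separability of the torsion-free $H$ to a statement about torsion elements of the finite extension $G$ — is the delicate point; I anticipate that the proof will route through \cite[Proposition 3.2]{Minasyan2012} (density of centralizers of arbitrary elements in hereditarily conjugacy separable groups, which covers our $g$ once we place it inside an appropriate hereditarily conjugacy separable finite-index overgroup) together with the separability of $C_G(g)$-cosets guaranteed by LERF for $G$.
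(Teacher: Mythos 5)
You correctly notice that applying \Cref{prop:C-Z_crit} with the normal subgroup $H$ collapses (since $g^{m}=1$ forces $C_G(g^m)=G$), but your conclusion that the criterion is therefore ``the wrong tool'' is where the proposal goes off the rails: the paper's fix is to keep the criterion and change the normal subgroup. After reducing (as you do) to $G=H\rtimes\langle g\rangle$ with $|g|=n$, the paper inducts on $n$. If $n$ is prime, the statement is exactly \cite[Corollary 3.9]{MinasyanZalesskii2015} --- a nontrivial external input any argument here still needs. If $n=mp$ is composite, one applies \Cref{prop:C-Z_crit} to the \emph{intermediate} normal subgroup $K=H\langle g^{m}\rangle$ of index $m$, so that the relevant power $g^{|G:K|}=g^{m}$ is a nontrivial torsion element and $C_G(g^{m})$ has usable structure: it splits as $K_1\rtimes\langle g\rangle$ with $K_1=C_G(g^{m})\cap H$, and conjugacy of $g$ inside it is checked by passing to $C_G(g^{m})/\langle g^{m}\rangle\cong K_1\rtimes(\Z/m\Z)$ (handled by induction) together with the retraction onto $\langle g\rangle$. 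Condition (ii) of the criterion is supplied not by LERF (virtually compact special groups are not LERF in general; and for finite-index subgroups of $G$ separability is automatic anyway, which is why your version of (ii) carried no content) but by the relative quasiconvexity of $C_G(g^{m})$ \cite[Theorem 1.1]{MinasyanOsin2012} combined with separability of relatively quasiconvex subgroups \cite[Theorem 4.7 and Theorem C]{GrovesManning2020}.

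The decisive gap is that your ``heart of the matter'' --- that $\langle g\rangle$-twisted conjugacy in $H$, equivalently $G$-conjugacy of torsion elements, is detected in finite quotients --- is precisely the content of the proposition, and the ingredients you cite do not deliver it. Hereditary conjugacy separability of $H$ and density of centralizers ($\overline{C_H(h)}=C_{\widehat H}(h)$ for $h\in H$) control elements of $H$, not the finite-order element $g\notin H$; \cite[Proposition 3.2]{Minasyan2012} cannot simply be ``applied to the automorphism $c_g$'' without argument, and placing $g$ inside ``an appropriate hereditarily conjugacy separable finite-index overgroup'' begs the question, since that overgroup would be (a conjugate of) $G$ itself. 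Your closing sentence concedes this (``making the bootstrap precise \dots is the delicate point; I anticipate \dots''), so the proposal stops exactly where the proof must begin: the induction on $|g|$, the prime-order case from \cite{MinasyanZalesskii2015}, and the application of \Cref{prop:C-Z_crit} to $K=H\langle g^{m}\rangle$ with the quasiconvexity and separability inputs above are all absent.
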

\begin{proof}
We shall use induction on the index $|G: H|=n$ to prove that every torsion element $g$ of $G$ is conjugacy distinguished. This is clear if $n=1$ or $g=1$, so we suppose $g\ne 1$, $n>1$, and the induction hypothesis.

Set $R=H \langle g \rangle$. 
If $|R:H|<n$ then by the  induction hypothesis $R$ has the statement of the proposition, so $g$ is conjugacy distinguished in $R$. But then  $g$ is conjugacy distinguished in $G$, as
$|G:R| \le |G:H|<\infty$.

Therefore we can assume that $|H: R| = n =|G:H|$. It follows that $G= R$, i.e., $G=H\langle g\rangle \cong   H \rtimes \langle g\rangle$, as $H$ is torsion-free and  $g$ has finite order (which must then be equal to $n$). We will now consider two cases.

{\it Case a:}  Suppose that $n$  is   a prime number $p$. Then $g$ is conjugacy distinguished in $G$ by \cite[Corollary 3.9.]{MinasyanZalesskii2015}.

{\it Case b:}  $n$ is a composite number. 
Thus $n=mp$ for some prime $p$ with $1< m< n$.

Let $K=H \langle g^m \rangle$. Then  $K\cong H \rtimes C_p$. Thus $K$ is hereditarily conjugacy separable by the induction hypothesis, as $|K:H|=m<n$. Evidently, $K\lhd G$ and $|G:K|=m$. By \cite[Theorem 1.1]{MinasyanOsin2012} $C_G(g^m)$ is relative quasiconvex and
since all  relative quasiconvex subgroups are separable by \cite[Theorem 4.7 combined with Theorem C]{GrovesManning2020}, it follows that every finite index subgroup of $C_G(g^m)$ is separable in $G$, so to use Proposition \ref*{prop:C-Z_crit} it remains to check that $g$ is conjugacy distinguished in $C_G(g^m)$.

Set $K_1=C_G(g^m) \cap H$, and observe that $C_G(g^m)=K_1 \langle g \rangle \cong K_1 \rtimes (\Z/n)$. Moreover, 
$K_1$ is   toral relatively  hyperbolic virtually compact special  as $|C_G(g^m):K_1|=n<\infty$ and $C_G(g^m) $ is  toral relatively  hyperbolic virtually compact special  by   \cite[Theorem 1.1]{MinasyanOsin2012}.

To verify that $g$ is conjugacy distinguished in $C_G(g^m)$, consider any element $h \in C_G(g^m)$ which is not conjugate to $g$ in $C_G(g^m)$.
Since $g^m$ is central in $C_G(g^m)$, we can let $L$ be the quotient of  $C_G(g^m)$ by $\langle g^m \rangle$, and let $\phi:C_G(g^m) \to L$ denote the natural epimorphism.

Clearly $\phi(K_1) \cong K_1$,  since  $K_1 \cap \ker \phi=\{1\}$.
Therefore $\phi(K_1)$ is torsion-free and $L=\phi(K_1) \langle
\phi(g) \rangle \cong K_1 \rtimes (\mathbb{Z}/m\mathbb{Z})$,  again   $L $ is virtually compact special groups since this class is closed  for finite index overgroups. Consequently, $L$ is
hereditarily conjugacy separable by the induction hypothesis, as
$|L:K_1|=m<n$.
Let us again consider two  subcases.

{\it Subcase b.1:} Suppose that $\phi(g)$ and $\phi(h)$ are not conjugate in $L$. Then there is a finite group $M$ and a homomorphism
$\psi:L \to M$ such that $\psi(\phi(g))$ is not conjugate to $\psi(\phi(h))$ in $M$. Thus the homomorphism $\eta=\psi\circ \phi:C_G(g^m) \to M$ will distinguish the conjugacy classes of $g$ and $h$, as required.

{\it Subcase b.2:} Assume that $\phi(g)$ is conjugate to $\phi(h)$ in $L$. Since $\ker \phi \subseteq \langle g \rangle$, we can deduce that
there is $y \in C_G(g^m)$ such that $ygy^{-1}=zh$, for some $z \in \langle g\rangle$.

Now, $z \neq 1$, since we assumed that $h$ is not conjugate to $g$ in $C_G(g^m)$. Therefore $1 \neq \xi(z)=z$, where $\xi: C_G(g^m) \to \langle g \rangle$ is the natural retraction (coming from the decomposition of
$C_G(g^m)$ as a semidirect product of $K_1$ and $\langle g\rangle$). Recalling that $\langle g\rangle$ is abelian, we see that $\xi(h)=\xi(ygy^{-1})=\xi(z)\xi(h)$. Therefore $\xi(h)$ is not conjugate to $\xi(g)$ in the finite cyclic group
$\langle g \rangle$. Thus we have distinguished the conjugacy classes of $g$ and $h$ in this finite quotient of $C_G(g^m)$.

Subcases b.1 and b.2 together imply that $g$ is conjugacy distinguished in $C_G(g^m)$. Therefore we have verified all of
the assumptions of Proposition \ref{prop:C-Z_crit} (for $G$ and the finite index normal subgroup
$K \lhd G$), so we can apply this proposition to deduce that $g$ is conjugacy distinguished in $G$. Thus Case 2 is completed.
 This finishes the proof of the proposition.
\end{proof}

\begin{rem}\label{prop:not cocompact} Proposition \ref{relhypvirtualspecial} is still valid if we assume that $H$ is a retract of a virtually compact special group (or equivalently  is a retract of a right angled Artin group). Indeed,  for item (a) in the proof  one has to use the combination of  Theorem 1.5, Proposition 3.8 and Lemma 2.5 in \cite{MinasyanZalesskii2015}, instead of \cite[Corollary 3.9]{MinasyanZalesskii2015}.
Suppose now we are in case (b). Then for the whole argument to work we just need to show that $C_G(g^m)$ is virtualy compact special within our hypothesis. But  $G'_0$ is virtually compact special and so $C_{G'_0}(g^m) $ is  toral relatively  hyperbolic virtually compact special  by   \cite[Theorem 1.1]{MinasyanOsin2012}. As $H$ is torsion-free, $g^m$ does not belong to any edge group of $G'_0$ and so $C_{G'_0}(g^m)=C_G(g^m)$ as required.  
\end{rem}

We can now prove \Cref{GVCSHRNRH} and \Cref{standard arithmetic}:

\begin{proofof}\emph{\Cref{GVCSHRNRH}}.	
By hypothesis $G$ has a subgroup $H$ of finite index which is toral relatively hyperbolic  and is a retract of a virtually compact special group. We   may assume $H$ to be normal taking its core; in particular  $H$ is torsion-free and so is hereditarily conjugacy separable by Lemma \ref{lem:inf_order} combined with Remark \ref{not cocompact}. We use induction on $n=[G:H]$.
Let $g$ be an arbitrary element of $G$, we need to prove that $g$ is conjugacy distinguished. If   $g$ has infinite order the result follows from \Cref{lem:inf_order} and Remark \ref{not cocompact}.
Suppose that   $g$ has finite order and $g^\gamma\in G$ for some $\gamma\in \widehat{G}$. Since $\widehat{G}= \widehat{H}G$, one has $\gamma=g_0\eta$ for some $g_0\in G$ and $\eta\in \widehat H$, so conjugating $g$ with $g_0$ we  may assume that $\gamma=\eta\in\widehat{H}$.
Set $K=H \langle g \rangle$.  Then $K= H \rtimes \langle g \rangle$.  Then by Proposition \ref{relhypvirtualspecial} and Remark \ref{prop:not cocompact} $g$ is conjugacy distunguished in $K$, i.e. there exists $h\in H$ such that $g^h=g^\gamma$ as needed. 
\end{proofof}

\begin{proofof}\emph{\Cref{standard arithmetic}}.
Observe that any torsion-free lattice in \SO(1,n) is  the fundamental group of a finite volume hyperbolic manifold and is therefore relatively hyperbolic to cusps which are free abelian groups.	
Then the result follows from \Cref{GVCSHRNRH}, since in the cocompact case by \cite[Theorem 1.10]{BergeronHaglundWise2011} a standard arithmetic subgroup of	$\SO(1,n)$ is virtually compact special and in the non-cocompact  case,  by \cite[Lemma 6.3 and Theorem 7.4]{PS}, a standard arithmetic subgroup of	$\SO(1,n)$ is a virtually virtual retract of a virtually compact special group.
	\end{proofof}

\begin{rem} \label{quaternions}	Note that $Spin(1,5)\cong \SL(2, \HQ)$, where $\HQ$ is the real algebra of Hamiltonian quaternions and so   arithmetic subgroups of it are commensurable with arithmetic subgroups of $\SO(1,5)$. Moreover, they are commensurable with a standard arithmetic subgroup of 
	$\SO(1,5)$. Indeed, let $H$ be a torsion-free arithmetic subgroup of $\SL(2, \HQ)$ contained in $\SO(1,5)$. Then $H$ is the fundamental group of a hyperbolic manifold $M$. But  $M$ is not closed, since the fundamental group of a closed hyperbolic manifold is hyperbolic, but $H$ contains $\Z\times\Z$. Then by \cite[Proposition 6.4.2]{Morris2015}, $H$ is standard.

\end{rem}

From Remark \ref{quaternions} we deduce the following:

\begin{cor} Let $\mathbb{H}$ be an algebra of quaternions. Standard arithmetic subgroups of $\SL(2, \mathbb{H})$ are conjugacy separable. 
\end{cor}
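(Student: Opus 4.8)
The plan is to reduce the statement about $\SL(2,\HQ)$ to the already-established \Cref{standard arithmetic} for $\SO(1,n)$. The key observation, recorded in \Cref{quaternions}, is the exceptional isomorphism $Spin(1,5)\cong\SL(2,\HQ)$ of real Lie groups, together with the fact that the covering $Spin(1,5)\to\SO(1,5)$ has finite (in fact order $2$) kernel. Consequently any arithmetic subgroup $\Gamma$ of $\SL(2,\HQ)$ maps, under this isogeny, to an arithmetic subgroup of $\SO(1,5)$ with kernel and cokernel of the map on $\Gamma$ both finite; hence $\Gamma$ is commensurable (up to a finite central extension) with an arithmetic subgroup of $\SO(1,5)$. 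For a general algebra of quaternions $\HQ$ over a number field the relevant isogeny is between the corresponding forms of $Spin$ and $\SO$, but the same commensurability conclusion holds.

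First I would pass to a torsion-free finite-index subgroup $H\le\Gamma$; by Selberg's lemma such $H$ exists, and it suffices to prove $H$ is conjugacy separable, since conjugacy separability of $\Gamma$ will follow once we know it holds for all torsion elements as well — but in fact we only need \Cref{standard arithmetic}, which already handles arbitrary (torsion or not) elements of the full standard arithmetic lattice, so this reduction is mainly to make the geometric picture clean. Next I would invoke the analysis in \Cref{quaternions}: $H$ is the fundamental group of a complete hyperbolic $5$-manifold $M$ of finite volume, and $M$ is non-compact because $H$ contains a copy of $\Z\times\Z$ (coming from a parabolic/cusp subgroup, or simply because cocompact lattices in $\SO(1,5)$ are word-hyperbolic and contain no $\Z^2$). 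Then \cite[Proposition 6.4.2]{Morris2015} forces $H$ to be of the simplest type, i.e.\ a \emph{standard} arithmetic lattice. Now \Cref{standard arithmetic} applies directly to $H$, giving that $H$ is conjugacy separable, and since $\Gamma$ is a finite extension of a standard arithmetic lattice — indeed $\Gamma$ itself is standard arithmetic in $\SO(1,5)$ up to the finite central kernel — \Cref{standard arithmetic} applies to $\Gamma$ as well, or one passes from $H$ to $\Gamma$ using that the classes of groups involved are closed under commensurability.

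The main obstacle I anticipate is bookkeeping rather than conceptual: one must check that the exceptional isogeny $Spin(1,5)\to\SO(1,5)$ carries the \emph{arithmetic} structure over correctly, so that an arithmetic subgroup of $\SL(2,\HQ)$ really does correspond to an arithmetic (and, after the cusp argument, \emph{standard} arithmetic) subgroup of $\SO(1,5)$, and that the finite kernel does not interfere with conjugacy separability — but a finite normal (central) subgroup never obstructs conjugacy separability, as one can quotient it out or use that conjugacy separability passes to finite extensions in this setting via \Cref{GVCSHRNRH}. A secondary point is verifying the hypothesis of \cite[Proposition 6.4.2]{Morris2015}, namely that $\SL(2,\HQ)$-lattices contain $\Z\times\Z$; this is exactly the non-cocompactness input, and it is supplied by the cusp structure of finite-volume hyperbolic $5$-manifolds. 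With these checks in place the corollary follows immediately from \Cref{standard arithmetic} and \Cref{quaternions}.
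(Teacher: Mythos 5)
Your proposal is essentially the paper's own proof: the paper deduces the corollary directly from \Cref{quaternions}, i.e.\ from the isomorphism $Spin(1,5)\cong \SL(2,\HQ)$, commensurability of its arithmetic subgroups with arithmetic lattices of $\SO(1,5)$, the observation that a torsion-free such lattice is the fundamental group of a non-closed hyperbolic manifold because it contains $\Z\times\Z$, so that \cite[Proposition 6.4.2]{Morris2015} forces it to be standard, and then \Cref{standard arithmetic} together with \Cref{GVCSHRNRH} to absorb finite-index passages and the finite central kernel. The only small point to tighten (present at the same level of detail in the paper's remark) is the source of the $\Z\times\Z$: it should come from the unipotent elements of the arithmetic group (the underlying $\Q$-group is isotropic since $M_2$ of a quaternion algebra has zero divisors), not from cusp subgroups, since invoking cusps presupposes the non-compactness one is trying to prove.
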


\section{Unit groups}

In this section $G$ is a finite group, $\Z G$ denotes its integral group ring and $\U(\Z G)$ the group of units of $\Z G$.
The aim of this section is to prove Theorems \ref{good} and \ref{Units}.
So we assume that $G$ satisfies the hypothesis of \Cref{good}, i.e. every non-commutative simple quotient of $\Q G$ is either totally definite quaternion or exceptional of type 2.

We start with the proof of  \Cref{good}.

\begin{proofof}{\it \Cref{good}}  	
Note that $\Z G$ is an order in $\Q G$ and $\Q G=\prod_{i=1}^n A_i$ with $A_i=\Q Ge_i$, where $e_1,...,e_n$ are the primitive central idempotents of $\Q G$. 
We order the $A_i$'s so that $A_i$ is either commutative or totally definite quaternion if and only if $i>m$. For every $i=1,\dots,n$ we fix an order $R_i$ in $A_i$ (for example we can take $R_i=\Z Ge_i$).	
	
	(i) It is well known that goodness is closed for commensurability and direct products (cf. \cite[Lemma 3.2 and Proposition 3.4]{GrunewaldJaikinZalesskii2008}). 
	It is also well known that, $\U(\Z G)$ is commensurable with $A\times \prod_{i>m} R_i^1$ where $A$ is a finitely generated abelian group  \cite[Remark~4.6.10 and Proposition~5.5.6]{JespersdelRio2016}). 

	Therefore we need to prove that  if $R$ is an order in $D$ with $M_2(D)$ an exceptional component of type (2) then $\SL_2(R)$ is good.  
	The goodness of $\SL_2(\Z)$ is the subject of \cite[Exercise 2, page 16]{Serre1997}. 
	By \cite[Corollary 4.3]{GrunewaldJaikinZalesskii2008}, if  $D$ is an imaginary quadratic field then $\SL_2(R)$  is good. Thus we need only to consider  the case where $D$ is a totally definite quaternion algebra over $\Q$.  
	
	By Remark \ref{quaternions},  $Spin(1,5)\cong \SL(2, \HQ)$, where $\HQ$ is the real algebra of Hamiltonian quaternions and so   $\SL_2(R)$ is commensurable with arithmetic  lattices of $\SO(1,5)$. By \cite[Lemma 6.3 and Theorem 7.4]{PS} a standard arithmetic subgroup of	$\SO(1,n)$ is a virtually virtual retract of a virtually compact special group and by   Haglund and Wise proved in \cite{H-W-1} that any such group is a virtually virtual retract of right angled Artin group.  By \cite[Proposition 3.8]{MinasyanZalesskii2015}  virtually virtual retracts of right angled Artin group  are good. 
	
	It remains to explain that $\SL_2(R)$ is commensurable with a standard arithmetic subgroup of 
	$\SO(1,5)$. Let $H$ be a torsion-free finite index subgroup of $\SL_2(R)$ contained in $\SO(1,5)$. Then $H$ is the fundamental group of a hyperbolic manifold $M$. Then $M$ is not closed, since the fundamental group of a closed hyperbolic manifold is hyperbolic, but $H$ contains $\Z\times\Z$. Then by \cite[Proposition 6.4.2]{Morris2015}, $H$ is standard. This finishes the proof of (i).
	
	(ii)   Taking $R_i = \Z Ge_i$  the map $x\mapsto (xe_1,...,xe_n)$ is an embedding of $\Z G$ in $\prod_i R_i$. Then $\U(\Z G)=\Z G \cap \prod_i \U(R_i) \subseteq \prod_i \U(R_i) \subseteq \prod_i R_i$. By \Cref{standard arithmetic} combined with \Cref{quaternions}, we have that $\U(R_i)$ is hereditarily conjugacy separable. By \cite[Theorem 1.2]{Ferov2016} a direct product of hereditarily conjugacy separable groups is hereditarily conjugacy separable, hence so is $\U(\Z G)$. This finishes the proof of \Cref{good}.
\end{proofof}

For the proof of \Cref{good} it was enough to use minimal properties of the simple components of $\Q G$. However for the proof of \Cref{Units} we need to analyze more carefully the possible simple components $A_i$ and the projections $Ge_i$ of $G$ in each $A_i$. Observe that each $A_i$ is either exceptional of type 2 or a division algebra. The possible exceptional components of type 2 and their respective groups $Ge_i$ where classified in \cite{EiseleKieferVanGelder2015}. On the other hand the finite subgroups of division algebras and the minimal division algebras containing them were classified by Amitsur in \cite{Amitsur1955}. These two classifications will be the main tool for the proof of \Cref{Units}. 
As a technicality we start with a list of groups having a non-allowed (i.e. not satisfying Theorem \ref{characterization} (iii)) simple component on their rational group algebra.

\begin{table}[h]
	$$
	\matriz{{llc}
		\hline
		G & \text{Structure} &  \text{Component of } \Q G \\\hline\hline
		\SG[12,3] & A_4=\PSL(2,3) & M_3(\Q) \\
		\SG[20,3] & C_5\rtimes C_4 & M_4(\Q) \\
		\SG[24,12] & S_4=\PGL(2,3) & M_3(\Q) \\
		\SG[32,6] & C_2^3\rtimes C_4 & M_4(\Q) \\
		\SG[36,9] & C_3^2\rtimes C_4 & M_4(\Q) \\
		\SG[36,10] & S_3\times S_3 & M_4(\Q) \\
		\SG [60,5] & A_5=\PSL(2,5) & M_4(\Q) \\
		\SG[64,138] & (C_2^4\rtimes C_2)\rtimes C_2 & M_4(\Q) \\		
		\SG [72,40] & S_3^2 \rtimes C_2 & M_4(\Q) \\
		\SG [80,49] & C_2^4\rtimes C_5 & M_5(\Q) \\
		\SG [120,34] & S_5 & M_4(\Q) \\
		\SG [288,1025] & (((C_2^4)\rtimes C_3)\rtimes C_2)\rtimes C_3 & M_6(\Q) \\
		\SG[320,1581] & (((C_2\times Q_8):C_2):C_5).C_2 &  M_5(\Q) \\
		\SG [360,118] & A_6=\PSL(2,9) & M_5(\Q) \\ 
		\hline
	}
	$$
	\caption{\label{ExcludedGroups} The second column displays structural information of the group $G$ displayed in the first column. The last column displays one Wedderburn component of $\Q G$.}
\end{table}

The verification of the information displayed in \Cref{ExcludedGroups} can be done using the GAP package \texttt{Wedderga} \cite{Wedderga}. 
For example, if $G=\SG[80,49]$ then 
$$\Q G\cong \Q \oplus \Q(\zeta_5) \oplus 3 M_5(\Q).$$ 
This can be verified with the following GAP calculation 
\begin{verbatim}
LoadPackage("wedderga");;
G:=SmallGroup(80,49);;QG:=GroupRing(Rationals,G);;
WedderburnDecompositionInfo(QG);                    
[ [ 1, Rationals ], [ 1, CF(5) ], [ 5, Rationals ], [ 5, Rationals ], 
[ 5, Rationals ] ]
\end{verbatim}
The reader can verify the information displayed in \Cref{ExcludedGroups} with similar calculations.

The exceptional components of type (2) that can occur as Wedderburn components of the rational group algebra of a finite group are classified in \cite{EiseleKieferVanGelder2015}, as well as the finite groups which have such exceptional components in their rational group algebra as a faithful Wedderburn component (i.e. the group embeds faithfully on the given exceptional component). While the number of such exceptional components is finite (actually 7), the dimension of the exceptional components of type (1) which occur as Wedderburn components of the rational group algebra of a finite group is unbounded. This follows from Amitsur's characterization of finite subgroups of division rings \cite{Amitsur1955} and specially from its Theorem 3. For our proof of Theorem~\ref{Units} we will need the following proposition of independent interest.

\begin{table}[h]
$$\matriz{{cll}
\hline A & G \\\hline\hline
\HQ(\Q) & Q_8 \\\hline
\quat{-1,-3}{\Q} & Q_{12} \\\hline
\HQ(\Q(\sqrt{2})) & G=Q_{16} \\\hline
\quat{-1,-3}{Q(\sqrt{3})} & G=Q_{24} \\\hline
\quat{\zeta_{2^{n-1}},-3}{\Q(\zeta_{2^{n-1}})} & G=C_3\rtimes C_{2^n}, (n\ge 4), 
C_{2^n} \text{ acting on } C_3  \text{ by inversion} \\\hline
\HQ(\Q(\zeta_m)) & G=Q_8\times C_m, 
m \text{ and } o_m(2) \text{ odd}\\\hline\hline
M_2(\Q) & S_3, D_8, D_{12} \\\hline
M_2(\Q(i)) & C_4\times S_3, \SG[16,6]=C_8\rtimes C_2,  \\
& \SG[16,13]=(C_4\times C_2)\rtimes C_2, \SG[32,11]=(C_4\times C_4)\rtimes C_2 \\\hline
M_2(\Q(\sqrt{-2})) & \SG[16,8]=C_8\rtimes C_2 \\\hline
 & C_3\times S_3, C_3\times D_8, C_3\times Q_8, C_6\times S_3,  \\
M_2(\Q(\sqrt{-3})) & \SG[24,8]=(C_6\times C_2)\rtimes C_2,
\SG[36,6]=C_3\times (C_3\rtimes C_4),  \\
& \SG[72,30]=C_3\times ((C_6\times C_2)\rtimes C_2) \\\hline
& \SG[32,8]=(C_2\times C_2).(C_4\times C_2), \SG[32,44]=(C_2\times Q_8)\rtimes C_2, \\
M_2(\HQ(\Q)) & \SG[32,50]=(C_2\times Q_8)\rtimes C_2, Q_8\times S_3, \\
& \SG[64,137]=((C_4\times C_4)\times C_2)\rtimes C_2 \\\hline
M_2\quat{-1,-3}{\Q} & \SG[48,18]=C_3\rtimes Q_{16}, 
\SG[48,39]=(C_4\times S_3)\rtimes C_2.\\\hline
}$$
\caption{\label{EC} The algebras and groups satisfying the hypothesis of \Cref{ExceptionalComponents}. The cases where $A$ is a division algebra are above the double line.}
\end{table}

\begin{prop}\label{ExceptionalComponents}
Let $A$ be a non-commutative rational algebra generated over $\Q$ by a finite subgroup $G$ of $\U(A)$. Suppose that every non-exceptional simple component of $\Q G$ is a division algebra (equivalently either a field or a totally definite quaternion algebra). Then $A$ and $G$ are as in \Cref{EC}.

In particular, $G$ has an abelian normal subgroup $A$ such that $G/A$ has exponent dividing $4$.
\end{prop}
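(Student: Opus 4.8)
The strategy is to combine two classifications — Amitsur's description of the finite subgroups of division rings \cite{Amitsur1955} (especially its Theorem~3) and the Eisele--Kiefer--Van Gelder classification of the faithful exceptional components of type~(2) of rational group algebras \cite{EiseleKieferVanGelder2015} — and then to prune the resulting lists using the hypothesis on $\Q G$, the bookkeeping being organised around \Cref{ExcludedGroups} and routine \texttt{Wedderga} \cite{Wedderga} computations. I argue with $A$ simple; this is the case needed for \Cref{Units}, and in general $A=\Q[G]$ is semisimple and one applies the argument to each non-commutative Wedderburn component $A_j$ of $A$ and to the image $G_j$ of $G$ in it, noting that $\Q G_j$ is a partial product of the Wedderburn components of $\Q G$ and so still satisfies the hypothesis. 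Thus $A$ is a faithful non-commutative Wedderburn component of $\Q G$, and applying the hypothesis to $A$ itself shows that $A$ is either a non-commutative division algebra or exceptional of type~(2). The two blocks of \Cref{EC}, above and below the double line, will correspond to these two cases. Throughout, note that a Wedderburn component $M_r(\Delta)$ of $\Q G$ \emph{violates} the hypothesis precisely when it is not a division algebra and not exceptional of type~(2), i.e.\ when $r\ge 3$, or $r=2$ and $\Delta$ is neither $\Q$, nor an imaginary quadratic field, nor a totally definite quaternion algebra over $\Q$; in particular no division-algebra component can ever disqualify $G$.

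\emph{First case: $A=M_2(D)$ is exceptional of type~(2).} By \cite{EiseleKieferVanGelder2015} there are only finitely many pairs $(A,G)$ with $A$ exceptional of type~(2) and $A$ a faithful Wedderburn component of $\Q G$, and they are listed explicitly. For each such $G$ one checks whether $\Q G$ contains a component violating the hypothesis and discards the pair if so. Since the Wedderburn components of $\Q(G/N)$ form a subfamily of those of $\Q G$, it suffices in practice to detect a quotient of $G$ occurring in \Cref{ExcludedGroups}, the remaining verifications being mechanical in \texttt{Wedderga}. What survives this pruning is exactly the part of \Cref{EC} below the double line.

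\emph{Second case: $A$ is a non-commutative division algebra.} Then $G$ is a finite subgroup of the division ring $A$ and $A=\Q[G]$, so $G$ occurs in Amitsur's classification. Cyclic $G$ give commutative $A$ and are discarded. The non-metacyclic groups on Amitsur's list — $\SL(2,3)$, $\SL(2,5)$, the binary octahedral group, and the infinite families built over the binary tetrahedral and octahedral groups — all surject onto $A_4=\PSL(2,3)$, onto $S_4=\PGL(2,3)$, or onto $A_5=\PSL(2,5)$, hence by \Cref{ExcludedGroups} have a component $M_r(\Q)$ with $r\ge 3$ in $\Q G$ and are discarded. For the remaining groups (metacyclic $Z$-groups and the groups $Q_8\times C_m$) one computes the Wedderburn decomposition of $\Q G$ — a product of cyclic algebras over cyclotomic fields — and imposes that no component be forbidden. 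This forces the action of the cyclic quotient on the cyclic normal subgroup to have image of order at most $2$, so every irreducible representation of $G$ has dimension at most $2$; then a short analysis of the remaining possibilities yields precisely the groups listed above the double line in \Cref{EC}, including the two displayed infinite families. The only non-elementary ingredient is the verification, via Hilbert symbols, that for those two families every potentially ``large'' component is in fact a quaternion \emph{division} algebra rather than a matrix algebra — concretely that $\HQ(\Q(\zeta_m))$ (for $m,o_m(2)$ odd) and $\quat{\zeta_{2^{n-1}},-3}{\Q(\zeta_{2^{n-1}})}$ are division algebras — so that they are allowed even though their centres need not be totally real. Finally, the last assertion is read off entry by entry from \Cref{EC}: in every case one exhibits an obvious abelian normal subgroup $N\trianglelefteq G$ with $G/N$ of exponent dividing $4$ — typically the cyclic subgroup of index $2$ in a (generalized) quaternion or dihedral group, the subgroup $C_4\times C_m\lhd Q_8\times C_m$, or the subgroup $C_3\times C_{2^{n-1}}\lhd C_3\rtimes C_{2^n}$ — so that $G/N$ embeds into $C_2\times C_2$ or $C_4$.

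The main obstacle is the division-algebra case with its two genuinely infinite families: one must control the full Wedderburn decomposition of the metacyclic $Z$-groups well enough to be certain that, apart from the faithful quaternion component, none of the infinitely many remaining components (as $m$ or $n$ grows) is of a forbidden shape, which reduces to the finite set of Hilbert-symbol computations mentioned above. The exceptional-type-(2) case, by contrast, is a purely finite check that is entirely mechanised via \texttt{Wedderga}.
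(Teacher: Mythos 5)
Your overall strategy coincides with the paper's: split according to whether $A$ is a division algebra or exceptional of type (2), invoke Amitsur's classification \cite{Amitsur1955} (as presented in \cite{ShirvaniWehrfritz1986}) in the first case and the Eisele--Kiefer--Van Gelder classification \cite{EiseleKieferVanGelder2015} in the second, and prune both lists by exhibiting forbidden Wedderburn components, organised through \Cref{ExcludedGroups} and \texttt{Wedderga} computations. The type-(2) case and the preliminary reduction to $A$ simple are handled essentially as in the paper, and the final reading of the ``in particular'' statement off \Cref{EC} is fine.

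In the division-algebra case, however, your case analysis as written is not exhaustive and the decisive pruning argument is left out. After discarding the groups surjecting onto $A_4$, $S_4$ or $A_5$, Amitsur's list still contains, besides the metacyclic Z-groups and the groups $Q_8\times M$ ($M$ a Z-group), the generalized quaternion groups $Q_{4m}$ with $m$ even; these are neither Z-groups nor of the form $Q_8\times C_m$, yet they are precisely the source of $Q_8$, $Q_{16}$ and $Q_{24}$ in \Cref{EC}, so your description of ``the remaining groups'' misses them. The paper treats them as a separate case: the quotient $D_{2m}$ contributes a component $M_2(\Q(\zeta_m+\zeta_m^{-1}))$, which can be exceptional only if its totally real centre equals $\Q$, forcing $m\in\{2,4,6\}$. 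The same mechanism (an exceptional $M_2$-component whose centre is totally real must have centre $\Q$) is what forces $m=3$ in the metacyclic case $C_m\rtimes C_4$ and, combined with Amitsur's arithmetic conditions (which give $n\ge 4$), pins down $C_3\rtimes C_{2^n}$; a further argument is also needed to reduce $Q_8\times M$ to $M$ cyclic. All of this is compressed in your proposal into ``a short analysis of the remaining possibilities''; the intermediate claim that the action has image of order at most $2$ is correct but does not by itself yield the list or the identification of $A$ (which the paper obtains via strong Shoda pairs). Finally, the Hilbert-symbol verification that the components of the two infinite families are division algebras, which you single out as the main difficulty, is only needed for the converse statement (the paper's \Cref{ExceptionalConverse}); the forward implication being proved does not require it.
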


\begin{proof}
The hypothesis implies that for every epimorphic image $H$ of $G$, all the non-exceptional simple components of $\Q H$ are division algebras. In particular, $G$ has not an epimorphic image isomorphic to one of the groups in \Cref{ExcludedGroups}. 
	
\textbf{Case 1}: Suppose that $A$ is a division algebra. 

In this case we consider separately the different options from Amitsur classification of finite subgroups of division rings \cite{Amitsur1955} as presented in  \cite{ShirvaniWehrfritz1986}.

\textbf{Subcase 1.1}: Suppose that $G$ is a Z-group, i.e. every Sylow  subgroup of $G$ is cyclic. Then $G$ is in one of the cases b) or c) in  \cite[Theorem~2.1.5]{ShirvaniWehrfritz1986} (case a) is excluded because as $A$ is non-commutative, $G$ is non-abelian).

In case b) we have $G=\GEN{a}_m\rtimes \GEN{b}_4$ with $m$ odd and $b$ acting on $\GEN{a}$ by inversion. 

Then $H=G/\GEN{b}=D_{2m}$ and one simple component of $\Q G$ is of the form $M_2(\Q(\zeta_m+\Q(\zeta_m^{-1})))$. By assumption this must be exceptional and as the center is totally real it must be $\Q$, so that $\zeta_m+\zeta_m^{-1}\in \Q$. As $m$ is odd and different from $1$ it follows that $m=3$. Therefore $G=\GEN{a}_3\rtimes \GEN{b}_4=Q_{12}$ and $A=\quat{-1,-3}{\Q}$. The latter is obtained with the strong Shoda pair $(\GEN{a,b^2},1)$.

In case c), $G=G_0\times G_1\times \dots \times G_s$ with $s\ge 1$, $G_0$ cyclic, $\gcd(G_i,G_j)=1$ for $i\ne j$ and for each $i=1,\dots,r$  we have $G_i=C_{p^a}\rtimes (C_{q_1^{n_1}}\times \dots \times C_{q_r}^{n_r})$ where $p,q_1,\dots,q_r$ are distinct primes and if $q_i^{k_i}$ is the order of the kernel of the action of $C_{q_i^{n_i}}$ on $C_{p^a}$ then for every $j=1,\dots,r$ we have 
\begin{enumerate}
	\item $k_i<n_i$,
	\item  $v_{q_j}\left(o_{\frac{|G|}{|G_i|}}(p) \right) < o_{q^{k_j}}(p)$, 
	\item if either $q_i$ is odd, or $q=2$ and $p\equiv 1 \mod 4$, then $p\not\equiv 1 \mod q_i^{k_i+1}$,
	\item if $q_i=2$ and $p\equiv -1 \mod 4$ then either $k_i=1$ or $p\not\equiv -1 \mod 2^{k_i}$.
\end{enumerate}

Fix $i=1,\dots,s$ and $G_i=C_{p^a}\rtimes (C_{q_i^{n_1}}\times \dots \times C_{q_r^{n_r}})$, as above. Then $G_i$ has a unique central cyclic subgroup $N$ of order $q_1^{k_1}\dots q_r^{k_r}$ and  $H=G_i/N=\GEN{A}_{p^m}\rtimes \GEN{B}_{q_1^{d_i}\cdots q_r^{d_r}}$ with $d_i=n_i-k_i>0$ for each $i$ and faithful action.  Then $C_{p^m}$ is a maximal abelian subgroup of $H$ and hence $\Q H$ has epimorphic image isomorphic to $M_{q_1^{d_1}\cdots q_r^{d_r}}(F)$ where $F$ is the fixed field of the automorphism of $\Q(\zeta_{p^m})$ given by $\sigma(\zeta_{p^m})=\zeta_{p^m}^t$ if $A^B=A^t$. As this component must be exceptional we have $q_1^{d_i}\cdots q_r^{d_r}=2$ and $F$ is either $\Q$ or imaginary quadratic. This shows that $s=1$, $G_1=C_{p^m}\rtimes C_{2^n}$ with $C_{2^n}$ acting on $C_{p^m}$ by inversion, so that $k_1=n-1$ and $F=\Q(\zeta_{p^m}+\zeta_{p^m}^{-1})$. The argument of the previous case shows that $p^m=3$. By condition (3) we have $k_1\ge 3$, so that $n\ge 4$.
Moreover, $G$ also have an epimorphic image isomorphic to $K=G_0\times H$ and if $m=|G_0|$ then $\Q K$ has a simple component isomorphic to $M_2(\Q(\zeta_m))$. As $m$ is odd and $\Q(\zeta_m)$ must be either $\Q$ or imaginary quadratic and coprime with $3$ it follows that $m=1$. Thus $G=C_3\rtimes C_{2^n}$ with action by inversion and $n\ge 4$.
Moreover the unique Wedderburn component of $\Q G$ on which $G$ embeds faithfully is obtained with the strong Shoda pair $(C_3\times C_{2^{n-1}},1)$ so that $A=\quat{\zeta_{2^{n-1}},-1}{\Q(\zeta_{2^{n-1}})}$. 

\textbf{Subcase 1.2}. Now we suppose that $G$ is not a Z-group. Then $G$ is as in cases b) or c) in \cite[Theorem~2.1.4]{ShirvaniWehrfritz1986}. However case b)i), b)iv) and c) are excluded because $S_4$, $\PSL(2,3)$ and $\PSL(2,5)$ are epimorphic images of the binary octaedral group, $\SL(2,3)$ and $\SL(2,5)$, respectively, appearing  in \Cref{ExcludedGroups}. So $G$ is in one of the cases b)i), b)ii) or b)iii). 

In case b)ii), $G=Q_{4m}$, a quaternion group with $m$ even. Then $G$ has a central subgroup $N$ of order $2$ with $H=G/N=D_{2m}$ a dihedral group of order $2m$. 
Then $\Q H$ has a simple component isomorphic to $M_2(\Q(\zeta_m+\zeta_m^{-1}))$ and hence $m\in \{2,4,6\}$. In the three cases $G$ has a cyclic subgroup $H$ of index $2$ which yields a strong Shoda pair $(H,1)$ and the corresponding  Wedderburn component is the cyclic algebra $A=(\Q(\zeta_{2m}),\sigma,-1)$ with $\sigma(\zeta_{2m})=\zeta^{-1}$. If $n=2$ then $A=\HQ(\Q)=\quat{-1,-1}{\Q}$, if $n=4$ then $A=\HQ(\Q(\sqrt{2}))$ and if $n=6$ then $A=\quat{-1,-3}{\Q(\sqrt{3})}$.  

In case b)iii) $G=Q_8\times M$ with $M$ a Z-group of order $m$ with $m$ and $o_2(m)$ odd. The latter means that $M$ is either cyclic or one of the groups of Case c) in \cite[Theorem~2.1.5]{ShirvaniWehrfritz1986}. However, the latter case must be excluded because then $G$ has an epimorphic image isomorphic to $K=Q_8\times C_{2^n}$ with $n\ge 4$ and $\Q K$ has a simple component isomorphic to $\HQ(\Q)\times \Q(\zeta_{2^n})\cong M_{\Q(\zeta_{2^n})}$. Therefore $M$ is cyclic. Again $G$ has a cyclic subgroup $H$ of index $2$ and $(H,1)$ is a strong Shoda pair which yields a division algebra isomorphic to $\HQ(\Q)\times \Q(\zeta_m)\cong \HQ(\Q(\zeta_m))$.
This finish the proof for Case 1.

\begin{table}[h!]
	$$	\matriz{{lll}
		\hline G & \text{Structure} &  \text{Epimorphic image} \\\hline\hline	
		\SG[24,3] &\SL(2,3) & \SG[12,3]=\PSL(2,3) \\ 
		\SG[40,3] & C_5\rtimes C_8 & \SG[20,3]=C_5\rtimes C_4 \\ 	
		\SG[48,28] &\mathcal{O^*}=2.S_4=\SL(2,3).2 & \SG[24,12]=S_4\\ 
		\SG[48,29] & \GL(2,3) & \SG[24,12]=\PSL(2,3) \\ 
		\SG[48,33] & C_4:\SL(2,3) &  \SG[12,3]=\PSL(2,3)\\ 
		\SG[64,37] & C_2 : (C_2^3 \rtimes C_4) & \SG[32,6]=C_3^2\rtimes C_4 \\
		\SG[72,19] & C_3^2\rtimes C_8 & \SG[32,6]=C_3^2\rtimes C_4 \\
		\SG[72,20] & (C_3\rtimes C_4)\times S_3 & \SG[36,10]=S_3\times S_3 \\
		\SG[72,22] & (C_6 \times S_3):2 & \SG[36,10]=S_3\times S_3 \\	
		\SG[72,24] & C_3^2:Q_8 & \SG[36,10]=S_3\times S_3 \\	
		\SG[72,25] & C_3 \times \SL(2,3) & \SG[12,3]=\PSL(2,3) \\	
		\SG[96,67] & \SL(2,3):C_4 &  \SG[24,12]=S_4 \\
		\SG[96,190] & (C_2\times \SL(2,3))\rtimes C_2 & \SG[24,12]=S_4  \\
		\SG[96,191] & (C_2:S_4)\rtimes C_2 & \SG[24,12]=S_4 \\
		\SG[96,202] & ((C_2 \times Q_8) \rtimes C2) \rtimes C3 & \SG[12,3]=A_4 \\
		\SG [120,5] & \SL(2,5) & \SG[60,5]=\PSL(2,5) \\
		\SG [128,937] & (Q_8\times Q_8)\rtimes C_2 & \SG[64,138]=(C_2^4\rtimes C_2)\rtimes C_2 \\	
		\SG[144,124] & C_3\rtimes (C_2:S_4) & \SG[24,12]=S_4\\
		\SG[144,128] & S_3\times \SL(2,3) & \SG[12,3]=\PSL(2,3)\\	
		\SG[144,135] & (C_3^2\rtimes C_8)\rtimes C_2 & \SG[36,9]=C_3^2\rtimes C_4 \\		
		\SG[144,148] & C_3^2 \rtimes ((C_4 \times C_2) \rtimes C_2) & \SG[36,10]=S_3^2 \\			
		\SG[160,199] & ((C_2\times Q_8)\rtimes C_2):C_5  & \SG[80,49]=C_2^4\rtimes C_5\\
		\SG[192,989] & (\SL(2,3) \rtimes C_4) \rtimes C_2  & \SG[24,12]=S_4 \\
		\SG[240,89] & C_2\rtimes S_5 &  \SG[120,34]=S_5 \\	
		\SG[240,90] & \SL(2,5)\rtimes C_2 &  \SG[120,34]=S_5 \\
		\SG[288,389] & C_3^2 \rtimes (C_4^2 \rtimes C_2)  & \SG[72,40]=S_3^2\rtimes C_2 \\
		\SG[384,618] & (Q_8^2\rtimes C_3)\rtimes C_2  & \SG[12,3]=A_4 \\
		\SG[384,18130] & (Q_8^2\rtimes C_3)\rtimes C_2  & \SG[24,12]=S_4\\
		\SG[720,409] & \SL(2,9) &   \SG[360,118]=\PSL(2,9) \\
		\SG[1152,155468] & ((Q_8^2 \rtimes C_3) \rtimes C_2) \rtimes C_3 &  \SG[288,1025]=((C_2^4 \rtimes C_3) \rtimes C_2) \rtimes C_3 \\
		\SG[1920,241003] & C_2:(C_2^4\rtimes A_5) & \SG[60,5]=A_5\\
		\hline
	}$$
	\caption{\label{NonAllowedQuotients}: Groups from \cite[Table~2]{EiseleKieferVanGelder2015} which have a proper epimorphic image in Table~\ref{ExcludedGroups}. The structure displayed is chosen to help in the recognition of the groups in the third column as epimorphic image of those in first column}
\end{table}

\textbf{Case 2}. Suppose that $A$ is not a division algebra. Then $A$ is exceptional of type (2), i.e. $A=M_2(D)$ with $D$ either $\Q$, an imaginary quadratic extension of $\Q$ or a totally definite quaternion algebra over $\Q$. Actually, by the main result in \cite{EiseleKieferVanGelder2015}, $D$ is either $\Q$, $\Q(\sqrt{-1})$, $\Q(\sqrt{-2})$, $\Q(\sqrt{-3})$, $H_1=\HQ(\Q)$, $H_3=\quat{-1,-3}{\Q}$ or $H_5=\quat{-2,-5}{\Q}$ and $G$ is one the groups in \cite[Table~2]{EiseleKieferVanGelder2015}. 
However $G$ is not any of the groups in the first column of \Cref{ExcludedGroups} or 
\Cref{NonAllowedQuotients}, because they have an epimorphic image in \Cref{ExcludedGroups}. 
This can be verified easily using GAP. For example, the following GAP calculation shows that $\SG[64,138]$ is an epimorphic image of $\SG[128,937]$:
\begin{verbatim}
G:=SmallGroup(128,937);;                       
NS:=NormalSubgroups(G);;
SSortedList(NS,x->IdSmallGroup(G/x));
[ [ 1, 1 ], [ 2, 1 ], [ 4, 2 ], [ 8, 3 ], [ 8, 5 ], [ 16, 11 ], [ 32, 27 ], 
[ 64, 138 ], [ 128, 937 ] ]
\end{verbatim}

Therefore $G$ is one of the groups in \cite[Table~2]{EiseleKieferVanGelder2015} not appearing neither in \Cref{ExcludedGroups} nor in \Cref{NonAllowedQuotients}. They are precisely the groups displayed in the last six rows of \Cref{EC}. Then, \cite[Theorem~3.7]{EiseleKieferVanGelder2015} implies that in each case $A$ is as in the corresponding line of the table.
\end{proof}

We are ready for the proof of Theorem \ref{Units}.

\begin{proofof}\emph{Theorem \ref{Units}}.
	Clearly (1) implies (2) and (3) implies (4) holds. The equivalence of (4) and (5) is the subject of  \cite[Corollary 3.2.8]{RibesZalesskii2010}. 
	The equivalence of (2) and (3) is proved in \cite[Proposition~2.2]{Jespers2021}.

	Let $e_1,\dots,e_n$ be the primitive central idempotents of $\Q G$.
Then $g\mapsto (ge_1,\dots,ge_n)$ defines an injective group homomorphism $f:G\rightarrow \prod_{i=1}^n Ge_i$. Each $Ge_i$ satisfies the hypothesis of the theorem because they are epimorphic images of $G$ and hence the Weddeburn components of $\Q Ge_i$ are also Wedderburn components of $G$. Moreover, each $Ge_i$ embedded in $\Q Ge_i$ which is either a division algebra or exceptional. By \Cref{ExceptionalComponents}, $Ge_i$ has an abelian normal subgroup $B_i$ such that $Ge_i/B_i$ has exponent dividing $4$. 
Then $G=\prod_{i=1}^n A_i$ is an abelian normal subgroup of $\prod_{i=1}^n Ge_i$ such that $(\prod_{i=1}^n Ge_i)/B$ has exponent dividing $4$. 
As $f$ is injective, $A=f^{-1}(B)$ is an abelian normal subgroup of $G$ with $G/A$ of exponent dividing $4$. In particular, $G$ is metabelian and by \cite[Theorem~31.1]{Sehgal1993}, $G$ has a torsion-free normal complement in $\U(\Z G)$, i.e. $\U(\Z G)=N\rtimes G$ with $N$ torsion-free. 

As $G$ is metabelian, (2) implies (1) is a consequence of a theorem of Withcomb \cite{Whitcomb1968} (see also \cite[Corollary~14.2.6]{Passman1977}). 	

	(4) implies (3) 
	By Theorem \ref{good}, $\widehat N$ is torsion-free. Consider the following commutative diagram
	
	$$\xymatrix{\U(\Z G)\ar[rd]\ar[d]&\\
		\widehat{\U( \Z G)}=\widehat{\U( \Z H)}\ar[r]&\U( \Z G)/N\\
		\U( \Z H)\ar[ru]\ar[u]&}$$
	
	Since $\widehat N$ is torsion-free, the restriction of the lower map to $H$ is injective, and so $H$ is isomorphic to a subgroup of $G$. In particular, $H$  satisfies the hypotheses of the theorem and so  has a torsion-free  complement $M$ whose profinite completion $\widehat M$ is torsion-free. Therefore, by symmetry $G$ is isomorphic to a subgroup of $H$. This implies that $G$ and $H$ are isomorphic.
\end{proofof}

\begin{rem}\label{ExceptionalConverse}
	The converse of Lemma~\ref{ExceptionalComponents} holds, i.e. every pair $A$ and $G$ appearing in the lemma satisfy the hypothesis of the lemma. 
	\Cref{WCLemma} displays the non-commutative Wedderburn components of the group algebras for all the groups in the lemma. 
	This can be verified for most groups using the GAP package \texttt{Wedderga}. For example one can compute the Wedderburn components of $\Q G$ for $G=\SG[72,37]$ with the following calculation:
	\begin{verbatim}
	G:=SmallGroup(72,30);;   
	gap> QG:=GroupRing(Rationals,G);;
	SSortedList(WedderburnDecompositionInfo(QG)$
	[ [ 1, Rationals ], [ 1, CF(3) ], [ 2, Rationals ], [ 2, CF(3) ] ]
	\end{verbatim} 
	which shows that the only non-commutative components are $M_2(\Q)$ and $M_2(\Q(\sqrt{-3}))$.

	Of course we cannot rely on GAP for the infinite families appearing in lines 5 and 6 of \Cref{EC}. For the second one can use that $\Q Q_8=4\Q \oplus \HQ(\Q)$ and $\Q C_m=\oplus_{d\mid m} \Q(\zeta_d)$. Then every non-commutative simple component of $\Q (Q_8\times C_m)$ is isomorphic to $\HQ(\Q)\otimes_{\Q} \Q(\zeta_d)=\HQ(\Q(\zeta_d))$. Since $d$ divides $m$ and $m$ and $o_m(2)$ are odd, so are $d$ and $o_d(2)$. This implies that $-1$ is not a sum of two squares in $\Q(\zeta_d)$ \cite{Moser1973} and hence $\HQ(\Q(\zeta_d))$ is a division algebra (see e.g. \cite[Proposition~1.6]{Pierce1982}). 
	
	Finally, if $G=C_3\rtimes C_{2^n}$ as in (2.e) then 
	$$\Q G = \Q C_{2^n} \oplus \oplus_{i=0}^{n-1} A_k$$
	with $A_k=\quat{\zeta_{2^k},-3}{\Q(\zeta_{2^k})}$. To prove this we can use the methods introduced in \cite{OlivieridelRioSimon2004} to compute the Wedderburn decomposition of group algebras of so called strongly monomial groups (see \cite[Section~3.5]{JespersdelRio2016}). Using \cite[Theorems~3.5.5 and 3.5.12 and Problem~3.4.3]{JespersdelRio2016} it follows that there is a one-to-one correspondence between the non-commutative Wedderburn components of $\Q G$ and the subgroups   of $Z(G)$. Observe that $Z(G)$ is cyclic of order $2^{n-1}$. The Wedderburn component associated to the subgroup $H_k$ of index $2^k$ in $Z(G)$ is the cyclic algebra $(\Q(\zeta_{3\cdot 2^k})/\Q(\zeta_{2^k}),\zeta_{2^k})$ which is the quaternion algebra described above. Moreover $A_0=M_2(\Q)$, $A_1=\quat{-1,-3}{\Q}$, $A_2=M_2(\Q(i))$ and $A_k$ is a division algebra for $k\ge 3$. 
	To prove that $A_2=M_2(\Q(i))$, observe that $i(1+i)^2-3=-1$. To prove that $A_k$ is a division algebra observe that if $m=3\cdot 2^k$ and $r$ is an integer satisfying $r\equiv 1 \mod 3$ and $r\equiv 1 \mod 2^k$ then in the notation of \cite{Amitsur1955}, $G/H_k=\U_{3\cdot 2^{k},r}$ and $A_k=\mathfrak{U}_{3\cdot 2^k,r}$. As $\U_{3\cdot 2^{k},r}$  is a subgroup of a division algebra, by \cite[Theorem~3]{Amitsur1955}, $A_k$ is a division algebra. 
	
	\begin{table}[h]
		$$\matriz{{llll}
			\hline G & \text{TDQA} & \text{Excep. 1} & \text{Excep. 2} \\\hline\hline			
			Q_8 & \HQ(\Q) \\\hline
			Q_{12} & \quat{-1,-3}{\Q} & & M_2(\Q) \\\hline
			Q_{16} & \HQ(\Q(\sqrt{2})) & & M_2(\Q) \\\hline
			Q_{24} & \HQ(\Q), \HQ(\Q(\sqrt{3})) & & M_2(\Q)  \\\hline
			C_3\rtimes C_{2^n}  & \quat{-1,-3}{\Q} & \quat{\zeta_{2^k},-3}{\Q(\zeta_{2^k})} (3\le k<n) & M_2(\Q),  \\
			(n\ge 4) & & & M_2(\Q(i)) \\\hline
			Q_8\times C_m & \HQ(\Q) & \HQ(\Q(\zeta_d))  (1\ne d) \\\hline
			S_3, D_8 & & & M_2(\Q) \\
			D_{12}, C_6\times S_3 \\\hline
			\SG[16,6]  & & & M_2(\Q(i)) \\
			\SG[16,13]\\\hline
			C_4\times S_3,  & & & M_2(\Q) \\
			\SG[32,11] & & &  M_2(\Q(i)) \\\hline
			\SG[16,8] & & & M_2(\Q) \\
			& & & M_2(\Q(\sqrt{-2})) \\\hline
			C_3\times S_3, C_3\times D_8 & & & M_2(\Q) \\
			\SG[24,8], \SG[72,30] & & &  M_2(\Q(\sqrt{-3})) \\\hline
			C_3 \times Q_8 & \HQ(\Q) & &  M_2(\Q(\sqrt{-3})) \\\hline
			\SG[36,6] & \quat{-1,-3}{\Q} & & M_2(\Q) \\
			& & & M_2(\Q(\sqrt{-3})) \\\hline
			\SG[32,8], \SG[32,44] &  & & M_2(\Q) \\
			\SG[64,137] & & & M_2(\HQ(\Q)) \\\hline
			\SG[32,50], &  & &  M_2(\HQ(\Q)) \\\hline
			&  & &  M_2(\Q) \\ 
			\SG[48,18]& \HQ(\Q(\sqrt{2})) & & M_2(\Q(\sqrt{-3})) \\
			& & & M_2\quat{-1,-3}{\Q} \\\hline		
			&  & &  M_2(\Q) \\ 
			\SG[48,39]&  & & M_2(\Q(i)) \\
			& & & M_2\quat{-1,-3}{\Q} \\\hline				
		}
		$$
		\caption{\label{WCLemma} The non-commutative simple components of the groups appearing in \Cref{ExceptionalComponents} classified by whether they are totally definite quaternion, exceptional of type 1 and exceptional of type 2.}
	\end{table}
\end{rem}

\begin{rem}
	Observe that the only exceptional components of type 1 appearing in \Cref{WCLemma} are $\quat{\zeta_{2^k},-3}{\Q(\zeta_{2^k})}$ with $k\ge 3$ and $\HQ(\Q(\zeta_d))$ with $1\le d$ and $d$ and $o_d(2)$ are odd. If $R$ is an order in one of these algebras then the real rank of $\SL_1(R)$ is greater than $1$. Indeed, for the center of both algebras are totally complex of degree $2^{k-1}$ and hence their rank is the number of places of their center which is $2^{k-2}$ for the first algebra and $\varphi(d)/2$ for the second one (see \cite[Examples~8.1.7]{Morris2015}). The first is greater than $1$ because $k\ge 3$. To see that $\phi(d)/2>1$ we use that $d>1$ and hence it is divisible by a odd prime $p$ and $o_p(2)$ is odd. This implies that $p\ge 7$ and hence $\varphi(d)\ge \varphi(p)\ge 6$. 
	
	This shows that the groups satisfying the hypothesis of \Cref{good} and \Cref{Units} are precisely those for which the groups of units of reduced norm 1 in an order of each Wedderburn component of the rational group algebra satisfies the Margulis Normal Subgroup Theorem (and  according to Serre's Conjecture should have the Congruence Subgroup Property, see the next section). 
\end{rem}

\section{Congruence Subgroup Problem}\label{CSP}

Our considerations in the previous sections  strongly connected to the Congruence Subgroup Problem. We state it here for the reader convenience.

Let $k$ be a global field and ${\bf G}$ be an almost simple,
connected, simply-connected algebraic group over $k$.
Let ${\bf G}(O)$ be the group of $S$-integral points in ${\bf G}$, where
${O}=O(S)$ is the {ring of $S$-integers in
$k$, for some non-empty, finite set $S$ of places $k$, containing
all the archimedean places.
An $S$-arithmetic group $\Gamma$ is a group commensurable with
${\bf G}(O)$.

Define the \emph{congruence} topology by taking the  subgroups
$$\Gamma(\alpha)=\{g\in {\bf G}(O)\cap \Gamma\}\mid g\equiv 1 ({\rm mod}\
\alpha)\}$$ corresponding to non-zero ideals $\alpha$ of $O(S)$ as
basis of neighbourhoods of the identity.  The completion
$\widetilde\Gamma$  of $\Gamma$ with respect to this topology is the
\emph{congruence completion}.

The \emph{congruence kernel}
$C=C(\Gamma)$ is the kernel of
the natural epimorphism
$\widehat{\Gamma}\longrightarrow \widetilde\Gamma.$

\begin{quote}
The  \emph{Congruence Subgroup Problem} (in modern understanding):  Compute  the congruence kernel $C$.
\end{quote}

The classical  congruence subgroup problem asked whether the congruence kernel is trivial; however all consequences of this hold also if the congurence kernel is finite. 
One says that $\Gamma$ has the Congruence Subgroup Property (CSP) if the congruence kernel is finite. 
J.-P. Serre \cite{serre} made the following

\begin{quote}
{\bf Conjecture.}   Let $S$ be  a  finite set  of  valuations  of $k$ that  contains  all  archimedean valuations if $k$ is a number field and is nonempty if $k$ has positive characteristic.  If the $S$-rank $rk_S{\bf G}:=\sum_{v\in S} rk_{k_v}{\bf G}\geq 2 $ (where $ rk_{k_v}({\bf G})$
is the dimension of a maximal $k_v$-split tori in ${\bf G}(k_v)$) and $rk_{k_v}{\bf G} >0$ for all non-archimedean $v\in S$ then the congruence kernel is finite (equivalently, central), i.e. CSP holds,  and if $rk_S{\bf G}=1$ then the congruence kernel is infinite, i.e. CSP does not hold.
\end{quote}

Note that arithmetic lattices satisfy the Margulis Normal Subgroup Theorem exactly when  $rk_S{\bf G}>1$ , i.e. when $\Gamma$ has  conjecturally  the Congruence Subgroup Property.

Suppose now that $\Gamma$ is the group of units with reduced norm 1 in an order of $A$. 
If $A$ is not exceptional then $\Gamma$ has the Congruence Subgroup Property. If $A$ is exceptional of type (2) then $\Gamma$ does not have the Congruence Subgroup Property . However if $A$ is exceptional of type (1) then  the Congruence Subgroup Property for $\Gamma$ is unknown. 

We can deduce now from Theorem \ref{good} the following statement about the congruence kernel of unit groups $U(\Z G)$ of a group ring $\Z G$ of a finite group whose rational algebra does not have exceptional components ot type (1). We  observe that the congruence kernel in this case is just the kernel of the natural homomorphism $\widehat{U(\Z G)}\longrightarrow U(\widehat\Z G)$. 

We denote by $vcd(\Gamma)$ the virtual cohomological dimension of the group $\Gamma$.

\begin{thm} Let $G$ be a finite group such that  $\Q G$ does not have exceptional  components of type (1) and let $C$ denote the congruence kernel of $\U(\Z G)$. Then  $vcd(C)\leq \sum_j (vcd(\Gamma_i))$, where $\Gamma_j$ runs via arithmetic lattices of the exceptional components of $\Q G$.
\end{thm}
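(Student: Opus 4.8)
The plan is to reduce the computation of $\vcd(C)$ to the corresponding computation for the individual arithmetic lattices $\Gamma_j$ coming from the Wedderburn components of $\Q G$. First I would recall the structure of $\U(\Z G)$ used throughout Section 4: decomposing $\Q G = \prod_{i=1}^n A_i$ and fixing orders $R_i$ in each $A_i$ (say $R_i = \Z G e_i$), one has that $\U(\Z G)$ is commensurable with $B \times \prod_i \U(R_i)$ where $B$ is a finitely generated abelian group, and moreover, after replacing each $\U(R_i)$ by a finite-index subgroup, with $B \times \prod_j \SL_1(R_j)$ where $j$ ranges over the non-commutative non-totally-definite components; here the $\SL_1(R_j)$ are exactly the arithmetic lattices $\Gamma_j$ appearing in the statement. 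Since $\Q G$ has no exceptional components of type (1), every $\Gamma_j$ is a lattice in either an exceptional component of type (2) or (by the classification) in one of the algebras controlled by Bergeron--Haglund--Wise, so each $\Gamma_j$ is (virtually) good by \Cref{good}(i).

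The next step is to identify the congruence kernel $C$ of $\U(\Z G)$ with a product of congruence kernels. Since the congruence topology on $\U(\Z G)$ is defined componentwise (reduction modulo ideals of $\Z G$ corresponds, under the embedding $\Z G \hookrightarrow \prod_i R_i$, to simultaneous reduction in each $R_i$), the congruence completion $\widetilde{\U(\Z G)}$ splits as the product of the congruence completions of the factors. The abelian factor $B$ and the totally definite quaternion factors contribute finite (hence trivial-$\vcd$) congruence kernels — for the abelian part the congruence kernel is trivial, and for $\SL_1$ of an order in a totally definite quaternion algebra the group itself is finite. Thus $C$ is commensurable with $\prod_j C_j$, where $C_j$ is the congruence kernel of $\Gamma_j$, i.e. the kernel of $\widehat{\Gamma_j} \to \widetilde{\Gamma_j}$. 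Commensurability does not change $\vcd$, so it suffices to bound $\vcd\big(\prod_j C_j\big)$.

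Now I would invoke goodness. By \Cref{good}(i), $\U(\Z G)$ is good, and goodness passes to finite-index subgroups, so each $\Gamma_j$ is good; goodness implies that $\widehat{\Gamma_j}$ has finite cohomological dimension bounded by $\vcd(\Gamma_j)$ (on a torsion-free finite-index subgroup). Since $C_j$ is a closed subgroup of $\widehat{\Gamma_j}$, it is also good-dimensioned in the sense that $\cd(C_j) \le \vcd(\Gamma_j)$ — more carefully, $C_j$ is a closed subgroup of the profinite group $\widehat{\Gamma_j}$, and for a torsion-free open subgroup $U_j \le \widehat{\Gamma_j}$ one has $\cd(C_j \cap U_j) \le \cd(U_j) \le \vcd(\Gamma_j)$, whence $\vcd(C_j) \le \vcd(\Gamma_j)$. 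Finally, for profinite groups $\cd$ is subadditive under (finite) direct products, $\cd(H_1 \times H_2) \le \cd(H_1) + \cd(H_2)$, so $\vcd\big(\prod_j C_j\big) \le \sum_j \vcd(C_j) \le \sum_j \vcd(\Gamma_j)$, which gives the claimed bound.

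The main obstacle I anticipate is the careful bookkeeping in the second step: verifying that the congruence topology of $\U(\Z G)$ really does decompose as the product of the congruence topologies on the factors $\U(R_i)$, and that passing from $\U(\Z G)$ to the commensurable group $B \times \prod_j \Gamma_j$ carries the congruence kernel to (a group commensurable with) $\prod_j C_j$ — commensurable groups have the same congruence kernel up to finite kernel and cokernel, but one must check the congruence structures match under the commensuration, which is where \cite[Remark~4.6.10 and Proposition~5.5.6]{JespersdelRio2016} and the observation that the congruence kernel of $\U(\Z G)$ is the kernel of $\widehat{\U(\Z G)} \to \U(\widehat\Z G)$ do the work. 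The cohomological inputs (goodness bounds $\cd$ of the profinite completion, $\cd$ is subadditive for products of profinite groups, closed subgroups do not increase $\cd$) are standard.
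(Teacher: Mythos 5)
Your proposal is correct and follows essentially the same route as the paper: decompose $\U(\Z G)$ up to commensurability along the Wedderburn components, note that the non-exceptional components contribute only finite congruence kernels, use the goodness of the exceptional lattices (established in the proof of Theorem \ref{good}) to bound the cohomological dimension of their profinite completions by $\vcd(\Gamma_j)$, and conclude via the closed-subgroup inequality and (sub)additivity of $\cd$ for direct products of profinite groups. The bookkeeping you flag about matching congruence topologies under commensuration is exactly the point the paper handles by choosing an open subgroup $H=\prod_i H_i$ of $\widehat{\U(\Z G)}$ meeting the finite congruence kernels of the non-exceptional factors trivially.
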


\begin{proof} Let  $\Q G=\oplus_{i\in I} A_i$ be the decomposition into the simple components. Let $\widehat{U(\Z G)}$ be the profinite completion  of $U(\Z G)$. Choose an open subgroup  $H$  such that

(1) $H=\prod_i H_i$ with $H_i$ being the profinite completion of an arithmetic lattice of the component 	$A_i$, 

(2) $H_i$ intersects the (finite) congruence kernel $C_i$ of $\Gamma_i$  trivially for each not exceptional component $A_i$.

 As it was shown in the proof of Theorem \ref{good}, the group $\Gamma_j$  is good    for each $j$  such that $A_j$ is exceptional and hence so is $H_j\cap U(\Z G)$. Moreover, since vcd is stable for commensurability, $vcd(H_i)=vcd(\Gamma_i)$ for each $i$.  Thus $vcd(H_j)=vcd(\Gamma_j)$ for each $j$ such that $A_j$ is exceptional.   Since vcd of a closed subgroup does not exceed vcd of the  group,  $vcd(C_j)\leq vcd(H_j)$.  So the result follows from the fact that cohomological dimension of a direct product is the sum of cohomological dimensions of the factors.
\end{proof}

\end{document}